\DeclareMathAlphabet\mathbfcal{OMS}{cmsy}{b}{n}
\def\Q{{\mathcal{Q}}} 
\def\k{\mathbb R} 
\def\k{{\kappa}}
\def\f{{f}}
\def\PP{{\mathsf{P}}}
\def\P{{\mathbb{P}}}
\def\mid{{\rm mid}}
\def\B{{\mathbb{B}}}
\begin{document}

\newtheorem{theorem}{Theorem}[section]
\newtheorem{lemma}[theorem]{Lemma}
\newtheorem{proposition}[theorem]{Proposition}
\newtheorem{corollary}[theorem]{Corollary}
\newtheorem{problem}[theorem]{Problem}
\newtheorem{construction}[theorem]{Construction}

\theoremstyle{definition}
\newtheorem{defi}[theorem]{Definitions}
\newtheorem{definition}[theorem]{Definition}
\newtheorem{notation}[theorem]{Notation}
\newtheorem{remark}[theorem]{Remark}
\newtheorem{example}[theorem]{Example}
\newtheorem{question}[theorem]{Question}
\newtheorem{comment}[theorem]{Comment}
\newtheorem{comments}[theorem]{Comments}

\newtheorem{discussion}[theorem]{Discussion}

\renewcommand{\thedefi}{}

\long\def\alert#1{\smallskip{\hskip\parindent\vrule%
\vbox{\advance\hsize-2\parindent\hrule\smallskip\parindent.4\parindent%
\narrower\noindent#1\smallskip\hrule}\vrule\hfill}\smallskip}

\def\ff{\frak}
\def\tf{torsion-free}
\def\Spec{\mbox{\rm Spec }}
\def\Proj{\mbox{\rm Proj }}
\def\hgt{\mbox{\rm ht }}
\def\type{\mbox{ type}}
\def\Hom{\mbox{ Hom}}
\def\rank{\mbox{rank}}
\def\Ext{\mbox{ Ext}}
\def\Tor{\mbox{ Tor}}
\def\ker{\mbox{ Ker }}
\def\Max{\mbox{\rm Max}}
\def\End{\mbox{\rm End}}
\def\xpd{\mbox{\rm xpd}}
\def\Ass{\mbox{\rm Ass}}
\def\emdim{\mbox{\rm emdim}}
\def\epd{\mbox{\rm epd}}
\def\repd{\mbox{\rm rpd}}
\def\ord{\mbox{\rm ord}}

\def\htt{\mbox{\rm ht}}

\def\DD{{\mathcal D}}
\def\EE{{\mathcal E}}
\def\FF{{\mathcal F}}
\def\GG{{\mathcal G}}
\def\HH{{\mathcal H}}
\def\II{{\mathcal I}}
\def\LL{{\mathcal L}}
\def\MM{{\mathcal M}}

\def\k{\mathbb{k}}




\title{Bisector fields of quadrilaterals}

 \author{Bruce Olberding} 
\address{Department of Mathematical Sciences, New Mexico State University, Las Cruces, NM 88003-8001}

\email{bruce@nmsu.edu}

\author{Elaine A.~Walker}
\address{Las Cruces, NM 88011}

\email{miselaineeous@yahoo.com}

\begin{abstract}    Working over a field of characteristic other than $2$, we examine a   relationship between quadrilaterals and  the pencil of conics passing through their vertices.   
Asymptotically, such a pencil of conics
is what we call a bisector field, a set~${\mathbb{B}}$ of paired lines such that each line $\ell$ in~${\mathbb{B}}$  simultaneously bisects each pair in~${\mathbb{B}}$ in the sense that  $\ell$  crosses the pairs of lines in ${\mathbb{B}}$  in pairs of points that all share the same midpoint. We show that a  quadrilateral induces a geometry on the affine plane via an inner product, under which we   
examine pencils of conics and pairs of bisectors of a quadrilateral. We  
show also how bisectors give a new interpretation of  some classically studied features of quadrangles, such as the nine-point conic.
%
%
%

 \end{abstract}

\subjclass{Primary 51A20, 51N10} 

\thanks{\today}

\maketitle

\section{Introduction}

Throughout the paper $\k$ denotes a field of characteristic other than $2$.
Thus all the results in this article hold not only in the Euclidean setting in which $\k$ is the field of real numbers, but also, for example,  in Galois geometries where $\k$ is a finite field.
It has been known at least since the work of Steiner that if $\k$ is the field of real numbers, then the centers of the conics in the pencil of conics through the vertices of a quadrilateral (or quadrangle)
 trace out  another conic in the plane, one not in the pencil. This conic, the
    {\it nine-point conic}\footnote{In  \cite{Vac}, Vaccaro has traced the interesting and  somewhat complicated  history of the nine-point conic.} of the quadrilateral,   passes through nine canonical points of the quadrilateral; 
    see Figure~1.
     \begin{figure}[h] 
     \label{9noobconics}
 \begin{center}
\includegraphics[width=0.7\textwidth,scale=.09]{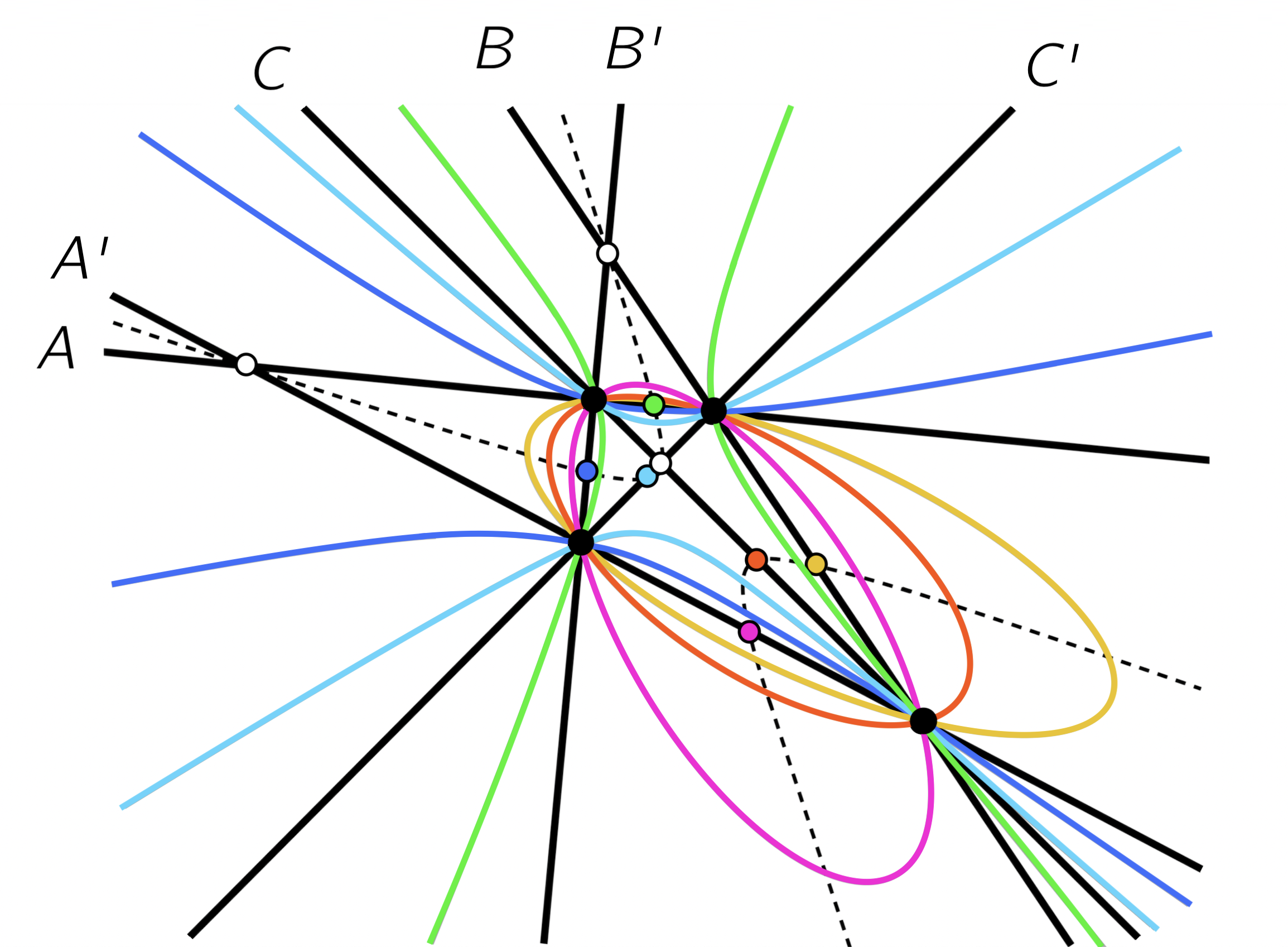} 
 \end{center}
 \caption{These nine conics (3 ellipses, 3 nondegenerate hyperbolas and 3 degenerate hyperbolas) belong to the pencil of conics through the four black points. The nine conics
 are
   centered on the midpoints and diagonal points of the quadrangle whose vertices are the four black points, and hence these centers are the nine distinguished points that define the nine-point conic for this quadrangle, which is   the dotted hyperobla.  }
\end{figure}
 In this article we explore how the geometry of the pencil of conics interacts with that of the quadrilateral and its  nine-point conic.  


 The unifying notion for our approach  is what we call a bisector of a quadrilateral, a line that crosses the pairs of opposite  sides of the quadrilateral in pairs of points that share the same midpoint; see Figures~2 and~3. 
 We prove the nine-point conic of a (proper) quadrilateral is the locus of midpoints of the bisectors of the quadrilateral. In so doing we give 
 a compact interpretation of the typically unwieldy equation that defines the nine-point conic. This is done using
 a squared norm
 that arises from an inner product  that encodes basic data of the quadrilateral.  
 
With this inner product,  a quadrilateral $Q$ induces a geometry on the affine plane, one in which  opposite sides of $Q$ are orthogonal and in which the midpoints of the bisectors are all the same ``squared distance'' from the center of the centroid. This inner product gives
 a natural way to pair the bisectors of $Q$.
 We call a pair of bisectors of $Q$ a {\it $Q$-pair} if the bisectors are orthogonal under this inner product and their midpoints have the centroid of $Q$ as a midpoint. 
 %
In particular, the pairs of opposite sides of $Q$ are $Q$-pairs of bisectors. Using results from \cite{OW4}, we show in Section 6 that there is a surprising  symmetry in this notion of bisection: Every $Q$-pair of bisectors of a quadrilateral $Q$ is bisected by every bisector of $Q$.   Thus not only does a bisector bisect the quadrilateral, the   pair to which it belongs is in return  bisected by all the other bisectors of the quadrilateral. This is expressed using what we call a {bisector field},   a collection of paired lines such that each line in the collection simultaneously  bisects every pair in the collection; see Figure~2.
   \begin{figure}[h] 
     \label{9noobconics}
 \begin{center}
\includegraphics[width=.75\textwidth,scale=.09]{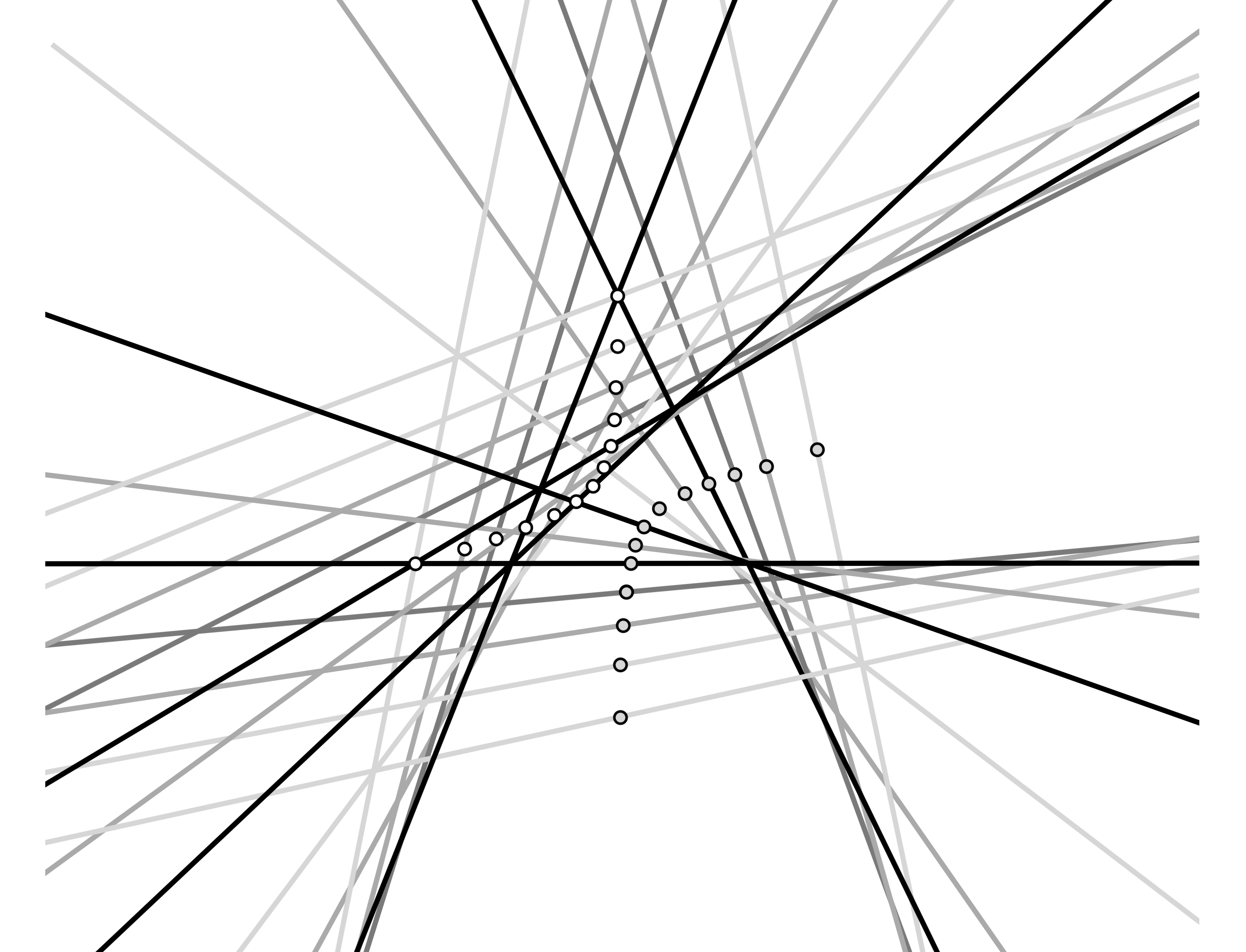} 
 \end{center}
 \caption{Pairs from a bisector field of a complete quadrilateral (whose sides and diagonals are the six black lines). 
The marked points trace out a hyperbola and  are the midpoints of the bisectors. Each bisector bisects all other pairs. The pairs are identifiable by the fact that the midpoints of the lines in the pair are antipodal on the bisector locus (the hyperbola). The points that are the intersections of the lines in the pairs lie on the branch with white points.
     }
\end{figure}
Although the original quadrilateral is present in the bisector field in the form of two pairs of lines, the bisector field is an entity that can be viewed independently  of the original quadrilateral. 

We use the symmetry of the bisection relation and results from \cite{OW4}  to interpret the pencil of conics through the vertices of a quadrilateral. We show that these conics  degenerate to the bisector field of the quadrilateral. That is,  
the degeneration of a conic in the pencil (which for a hyperbola in the pencil is simply its asymptotes) is a pair of lines that is a $Q$-pair of bisectors for the quadrilateral $Q$ of the pencil. 
Conversely, every $Q$-pair of bisectors arises as a degeneration of a conic in the pencil.  

Bisector fields are the subject of \cite{OW4}, and will be studied further in \cite{OW6}.  Except for results in Section~6, the present article is  independent from \cite{OW4}. 
Although it has no bearing on the methods or results of the present paper, we mention   that  
the present paper grew out of our interest in  planar arrangement theorems involving rectangles inscribed in quadrilaterals; see for example \cite{OW,OW2,OW3,Sch,Sch2,Tup}. In the cited articles, various aspects of the   flow of inscribed rectangles through complete quadrilaterals are studied. Affine transformations of these objects result in flows of inscribed parallelograms through complete quadrilaterals. 
We were led to the notion of a bisector in considering the extremal notion of    degenerate parallelograms inscribed in complete quadrilaterals. When such  parallelograms are extended to lines they are precisely the bisectors of the quadrilaterals. These bisectors serve as the unifying idea for this article and \cite{OW4} and \cite{OW6}. 


%



\smallskip

{\bf Notation and terminology}. 
By a {\it $($complete$)$ quadrilateral} $Q = ABA'B'$ we mean a arrangement of four distinct lines $A,B,A',B'$,  called the {\it sides of $Q$},  and the six points of intersections of these four lines. We require that not all four lines go through a single point. Also,  
adjacent sides of $Q$  are not allowed to be parallel but opposite sides are. (Here we are being more restrictive in the definition of quadrilateral than in \cite{OW4}, where adjacent sides are allowed to be parallel.) 
The intersections of adjacent sides are the {\it vertices} of $Q$. 
The two lines through nonadjacent vertices are the {\it diagonals} of $Q$.  
Because of some technical considerations later in the paper and in \cite{OW6}, we  deviate from conventional terminology and allow  a quadrilateral to have three sides that pass through a single point, and so two vertices of $Q$ (but at most two) can coincide. 
 In this case we say the quadrilateral $Q$ is {\it improper}, and otherwise $Q$ is {\it proper}.    The {\it centroid} of a quadrilateral $Q$ is the midpoint of the midpoints of the diagonals of $Q$.  Equivalently, the {centroid} of $Q$ is the midpoint of the midpoints of any pair of opposite sides of $Q$.

If a quadrilateral is proper, its four distinct vertices define a quadrangle in the traditional sense:
A {\it quadrangle} $\Q$ consists of four distinct points in the plane, the {\it vertices} of $\Q$, and the six lines through them, the {\it sides} of $\Q$. A quadrilateral $Q$ {\it belongs} to a quadrangle $\Q$ if  the vertices of $Q$ are the same as those of $\Q$ and hence the four sides of the quadrilateral~$Q$ are among the six lines through the vertices of $\Q$.  
There are three quadrilaterals that belong to a quadrangle, and each of these  is necessarily proper. The improper quadrilaterals are precisely the quadrilaterals that do not belong to a quadrangle. By a {\it parallelogram}  we mean a quadrilateral whose pairs of opposite sides are parallel. 
 The centroid of a quadrangle $\Q$  is the (shared) centroid of any quadrilateral belonging to $\Q$. 

We occasionally need to work in projective space. We denote the projective closure of  $n$-dimensional affine space ${\mathbb{A}}^n(\k)=\k^n$ by $\P^n(\k)$. The points in $\P^n(\k)$ are written in homogeneous coordinates $[x_1:x_2: \cdots :x_{n+1}]$, where $x_1,x_2,\ldots,x_{n+1} \in \k$ are not all zero and 
 this point is the   line   in $\k^{n+1}$    that passes through the origin and the point $(x_1,x_2,\ldots,x_{n+1})$. We identify ${\mathbb{A}}^n(\k)$ with the set of points in $\P^n(\k)$ of the form $[x_1:\cdots :x_{n}:1]$.  
We will only need $\P^1(\k)$ and $\P^2(\k)$ in this article. By  {\it the line at infinity} in $\P^2(\k)$ we mean the line $\{[x_1:x_2:0] : x_1,x_2 \in \k$ not both zero$\}$.






 Throughout the paper, we use the following convention. 
 \begin{quote}
{\it If $tX- uY+v=0$ defines a line $L$ in the plane  $\k \times \k$, then  we can assume without loss of generality that if $u =0$ then $t=1$, and if $u \ne 0$ then $u =1$. 
With this assumption, we have a canonical choice of coefficients for the line, and so we may unambiguously  write $t_L $ for $t$, $u_L$ for $u$ and $v_L$ for $v$. 
}  \end{quote}

\section{$Q$-orthogonal pairs }


For a quadrilateral $Q$, we introduce  an inner product on $\k \times \k$ under which the pairs of opposite sides and diagonals of $Q$ are orthogonal. In the next definition we are using the notational convention from the last section. 

\begin{definition} \label{alpha def notation}
 Let $Q=ABA'B'$ be a quadrilateral, possibly improper, and let  \begin{eqnarray*}
\alpha & = & t_Au_Bu_{A'}u_{B'} - u_At_Bu_{A'}u_{B'} + u_Au_Bt_{A'}u_{B'}  - u_Au_Bu_{A'}t_{B'} \\
%
\beta 
& = & 
 t_Au_Bt_{A'}u_{B'}-u_At_Bu_{A'}t_{B'}\\
\gamma 
& = &  t_At_Bt_{A'}u_{B'}-t_At_Bu_{A'}t_{B'}+t_Au_Bt_{A'}t_{B'}-u_At_Bt_{A'}t_{B'}.
  \end{eqnarray*}  
Let $\Phi_Q$ be the quadratic form $$\Phi_Q(X,Y) = \gamma X^2 -2\beta XY +\alpha Y^2.$$ 
  For the vector space $\k^2$,  define an inner product (i.e., symmetric bilinear form) 
         by 
  $$\langle {\bf v},{\bf w} \rangle_Q = {\bf v}^T
  \begin{bmatrix} 
  \gamma & -\beta \\
  -\beta & \alpha
  \end{bmatrix}
  {\bf w} \:  {\mbox{ for all }} {\bf v},{\bf w} \in \k^2. 
  $$
  Therefore,  $\Phi_Q({\bf v}) = \langle {\bf v},{\bf v} \rangle_Q$ for all $ {\bf v},{\bf w} \in \k^2.$

%
  \end{definition}

The quadratic form $\Phi_Q$ can be viewed as a squared norm on $\k^2$. 
Different quadrilaterals can  induce the same inner product and squared norm since these objects depend only on the slopes of the sides of the quadrilateral, not their position.

  \begin{lemma} \label{nondeg form} The inner product $\langle -,-\rangle_Q$ is nondegenerate. 
  \end{lemma}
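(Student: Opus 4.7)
The inner product $\langle-,-\rangle_Q$ has Gram matrix $\bigl[\begin{smallmatrix}\gamma & -\beta\\ -\beta & \alpha\end{smallmatrix}\bigr]$, so nondegeneracy amounts to showing that $\alpha\gamma-\beta^2\ne 0$. The plan is to factor this determinant as a product of quantities that each measure the (non)parallelism of a pair of adjacent sides of $Q$, and then to invoke the standing assumption that adjacent sides of a quadrilateral are never parallel.

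The key observation is the regrouping
\begin{align*}
\alpha &= u_{A'}u_{B'}(t_Au_B-u_At_B) + u_Au_B(t_{A'}u_{B'}-u_{A'}t_{B'}),\\
\gamma &= t_{A'}t_{B'}(t_Au_B-u_At_B) + t_At_B(t_{A'}u_{B'}-u_{A'}t_{B'}),
\end{align*}
which displays $\alpha$ and $\gamma$ as linear combinations of the two ``direction determinants'' $d(A,B):=t_Au_B-u_At_B$ and $d(A',B'):=t_{A'}u_{B'}-u_{A'}t_{B'}$ attached to the opposite pairs of sides. Guided by these expressions, I would conjecture and then verify the identity
$$
\alpha\gamma-\beta^2 \;=\; d(A,B)\, d(A',B')\, d(A,B')\, d(B,A'),
$$
where more generally $d(X,Y):=t_Xu_Y-u_Xt_Y$. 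The right-hand side is precisely the product of the direction determinants of the four pairs of \emph{adjacent} sides of $Q$, and each such determinant vanishes if and only if the two sides it records are parallel.

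The main obstacle is verifying this polynomial identity. Rather than attempting a brute-force expansion of $\alpha\gamma$ and $\beta^2$, I would first check the identity in the subcase $u_A=u_B=u_{A'}=u_{B'}=1$, where it reduces to a manageable identity in the four slopes (one can, for instance, set $P=t_A-t_B$, $Q=t_{A'}-t_{B'}$, write $\beta=aQ+b'P=bQ+a'P$, and then observe $2(\alpha\gamma-\beta^2)=2PQ\bigl[(t_A-t_{B'})(t_B-t_{A'})\bigr]$). The general identity then follows by homogeneity, since each side is homogeneous of degree $2$ in every coefficient pair $(t_L,u_L)$: if it holds when all $u_L=1$, rescaling recovers the full identity.

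Once the factorization is established, the proof concludes at once. Because the four pairs of adjacent sides of $Q$ are, by the definition of a quadrilateral, not parallel, each of the factors $d(A,B)$, $d(A',B')$, $d(A,B')$, $d(B,A')$ is nonzero, hence so is their product, and therefore $\alpha\gamma-\beta^2\ne 0$ as required.
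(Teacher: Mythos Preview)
Your proposal is correct and follows exactly the paper's approach: factor the determinant $\alpha\gamma-\beta^2$ (equivalently $\beta^2-\alpha\gamma$) as the product of the four direction determinants $t_Xu_Y-u_Xt_Y$ over the adjacent side pairs, then invoke the standing assumption that adjacent sides are not parallel. The paper simply asserts the identity is ``straightforward to verify,'' whereas you supply a verification strategy via dehomogenization and multi-homogeneity; that extra detail is sound, since both sides are homogeneous of degree~$2$ in each pair $(t_L,u_L)$ and hence are determined by their values at $u_L=1$.
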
 
  
  \begin{proof} 
  It is straightforward to verify
\begin{eqnarray} \label{disc eq} 
\beta^2-\alpha \gamma & = &  (t_Au_B-t_Bu_A)(t_Bu_{A'}-t_{A'}u_B)(t_{A'}u_{B'}-t_{B'}u_{A'})(t_{B'}u_A-t_Au_{B'}).
\end{eqnarray}
 Since adjacent sides of $Q$ are not parallel, $\beta^2 - \alpha\gamma \ne 0$. The determinant of the matrix in Definition~\ref{alpha def notation}   is $\alpha\gamma-\beta^2$, so this matrix is invertible, and hence the inner product is nondegenerate. 
  \end{proof}
  

\begin{definition}
Two lines $\ell_1,\ell_2$ in $\k^2$ are {\it $Q$-orthogonal} if
$\langle (u_{\ell_1},t_{\ell_1}),(u_{\ell_2},t_{\ell_2})\rangle_Q=0.$
  \end{definition}


To prove the main result of this section, Proposition~\ref{alpha def}, we recall that
an {\it involution} (or more precisely, an involutive homography) on a line $\ell$  in the projective plane $\P^2(\k)$ is a projective transformation $\Lambda$ from $\ell$ to $\ell$ for which $\Lambda \circ \Lambda$ is the identity map on $\ell$. 
Two points $p$ and $q$ on $\ell$ are {\it conjugate} under $\Lambda$ if $\Lambda(p) = q$. 

\begin{lemma} 
\label{des}  {\rm (Desargues' Involution Theorem)} 
Let $\Q$ be a quadrangle in $\P^2(\k)$, and let $\ell$ be a line in $\P^2(\k)$   that  does not pass through a vertex of $\Q$. 
 There is an involution $\Lambda$ on $\ell$ such that if $p$ and $q$ are the points of intersection of $\ell$ and a conic through the vertices of $\Q$, then $p$ and $q$ are conjugate under $\Lambda$. Moreover, any two conjugate pairs of points on $\ell$ determine the involution. 
\end{lemma}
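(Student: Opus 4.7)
The plan is to realize the pencil of conics through $\Q$ concretely and read off a symmetric bilinear relation that pairs intersection points on $\ell$. Fix two degenerate members of the pencil through $\Q$ (for instance, two pairs of opposite sides of the quadrangle), say $C_1$ and $C_2$, defined by homogeneous quadratic polynomials $F_1$ and $F_2$. Every conic through the four vertices of $\Q$ is then defined by $F_1 + \lambda F_2$ for some $\lambda \in \k\cup\{\infty\}$.

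Parametrize $\ell$ projectively by $[t:s]\mapsto p(t,s)\in\ell$ and substitute into $F_1+\lambda F_2$ to obtain a family of binary quadratics
$$(a_1 + \lambda a_2)\, t^2 + (b_1 + \lambda b_2)\, ts + (c_1 + \lambda c_2)\, s^2,$$
whose two zeros are exactly the two intersection points of $\ell$ with the corresponding conic. For a fixed conic with affine root pair $t_1,t_2$, Vieta's formulas give two expressions for $\lambda$ as rational functions of the roots; eliminating $\lambda$ between them produces the symmetric bilinear relation
$$(a_1 b_2 - a_2 b_1)\, t_1 t_2 \;+\; (a_1 c_2 - a_2 c_1)(t_1 + t_2) \;+\; (b_1 c_2 - b_2 c_1) \;=\; 0.$$
Rearranging this as $t_2 = \Lambda(t_1)$ exhibits $\Lambda$ as a Möbius transformation whose associated matrix is visibly symmetric in $t_1,t_2$ and whose square is a scalar multiple of the identity; hence $\Lambda$ is an involution of $\ell$, and by construction any conic in the pencil meets $\ell$ in a pair conjugate under $\Lambda$.

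The main obstacle is verifying that this equation is nondegenerate, i.e., that not all three coefficients above vanish and that the resulting Möbius map is not a constant. This is where the hypothesis that $\ell$ avoids the vertices of $\Q$ enters. If all three $2\times 2$ determinants were zero, then $F_1|_\ell$ and $F_2|_\ell$ would be proportional as binary quadratics, so $C_1$ and $C_2$ would cut $\ell$ in the same point pair; since $C_1,C_2$ generate the pencil, every conic in the pencil would contain those two points, forcing them to be base points of the pencil, hence vertices of $\Q$—a contradiction. The degenerate-Möbius case is ruled out by the same argument applied to the fixed point of the collapsed map.

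For the final clause, a projective involution on $\ell\cong\P^1(\k)$ is a projective transformation of order dividing $2$, and such a transformation is determined by its effect on any two distinct points. In particular, two conjugate pairs on $\ell$ uniquely determine the involution $\Lambda$, completing the proof.
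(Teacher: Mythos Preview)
The paper does not give its own proof of this lemma; it cites Berger \cite[14.2.8.3]{Berger} and \cite{Ngy} for a proof valid over fields of characteristic $\ne 2$, and \cite[Corollary 5.6]{OW4} for an alternative via bisector methods. Your argument is the standard classical one and is essentially correct: restrict the pencil $F_1+\lambda F_2$ to $\ell$, eliminate $\lambda$ via Vieta, and read off a symmetric bilinear relation $\alpha\, t_1t_2+\beta(t_1+t_2)+\gamma=0$ whose associated M\"obius map has matrix of trace zero and is therefore an involution.

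Two places deserve a word more care. First, you check $(\alpha,\beta,\gamma)\ne(0,0,0)$ but should also spell out why $\beta^2\ne\alpha\gamma$ (so that the M\"obius map is invertible rather than constant). Your one-line remark is on the right track: if $\beta^2=\alpha\gamma$ with $\alpha\ne 0$, the relation factors as $(\alpha t_1+\beta)(\alpha t_2+\beta)=0$, so every conic in the pencil meets $\ell$ at the fixed point $t_0=-\beta/\alpha$; then $t_0$ lies on all conics of the pencil and is therefore a vertex of $\Q$, contradicting the hypothesis. Second, the phrase ``determined by its effect on any two distinct points'' is not literally true for projective transformations of $\P^1$; what you mean is that an involution corresponds (up to scalar) to a trace-zero $2\times 2$ matrix, hence has two projective degrees of freedom, and each conjugate pair imposes one linear condition. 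Equivalently, knowing $\Lambda(p_1)=q_1$ also gives $\Lambda(q_1)=p_1$, so two conjugate pairs generically yield three point-images, which suffice.
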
 

For a proof of Lemma~\ref{des}
 in our setting of arbitrary fields of characteristic other than $2$, see \cite[14.2.8.3, p.~125]{Berger} or \cite{Ngy}. For a proof using bisector methods, see \cite[Corollary 5.6]{OW4}.


 \begin{proposition}  \label{alpha def} 
\label{des2} 
If $Q$ is a quadrilateral, possibly improper, then the 
 pairs of opposite sides of $Q$ are $Q$-orthogonal, as are the diagonals of $Q$.  
 \end{proposition}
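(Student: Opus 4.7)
The plan has two parts. First I would verify directly that each pair of opposite sides of $Q$ is $Q$-orthogonal; then I would deduce the orthogonality of the diagonals from Desargues' Involution Theorem together with the uniqueness of an involution on $\P^1(\k)$ determined by two conjugate pairs.

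For the opposite sides $A,A'$, the $Q$-orthogonality condition reads
\[
\gamma\, u_A u_{A'} - \beta\bigl(u_A t_{A'}+t_A u_{A'}\bigr) + \alpha\, t_A t_{A'}=0,
\]
which after substituting the formulas for $\alpha,\beta,\gamma$ from Definition~\ref{alpha def notation} becomes a polynomial identity in the eight slope parameters of the sides. I expect this to be a routine expansion in which every monomial cancels in pairs. The case of $B,B'$ follows from the built-in symmetry of Definition~\ref{alpha def notation} under the swap $(A,A')\leftrightarrow(B,B')$.

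Next I turn to the diagonals. Because $\langle -,-\rangle_Q$ is nondegenerate by Lemma~\ref{nondeg form}, the assignment $[v]\mapsto[v]^\perp$ defines an involution $\sigma$ on the projective line of directions $\P^1(\k)$, and two lines are $Q$-orthogonal exactly when their points at infinity $[u_\ell:t_\ell]$ are paired by $\sigma$. Assume first that $Q$ is proper, and let $\Q$ be the quadrangle formed by its four distinct vertices. The line at infinity $\ell_\infty\subset\P^2(\k)$ misses the vertices of $\Q$ because adjacent sides of $Q$ are not parallel, so Lemma~\ref{des} provides an involution $\Lambda$ on $\ell_\infty$ that pairs the two intersections of $\ell_\infty$ with each conic through the vertices of $\Q$. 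Applied to the three degenerate members of the pencil, namely $A\cup A'$, $B\cup B'$, and $D_1\cup D_2$, this says $\Lambda$ interchanges the points at infinity within each of these three pairs of lines. By the first step, $\sigma$ agrees with $\Lambda$ on the pairs $(A,A')$ and $(B,B')$; since an involution on $\P^1(\k)$ is determined by any two conjugate pairs, $\sigma=\Lambda$, and hence the diagonals $D_1,D_2$ are $Q$-orthogonal.

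The main obstacle, I expect, is the improper case. When three sides of $Q$ are concurrent, Desargues' theorem does not apply directly since only three vertices are distinct. The cleanest fix is a polynomial-identity argument: after expressing the slope parameters of the diagonals as rational functions in the twelve coefficient indeterminates $t_A,u_A,v_A,\ldots,t_{B'},u_{B'},v_{B'}$ and clearing denominators, the $Q$-orthogonality of the diagonals becomes a polynomial identity with integer coefficients which we have already established on the Zariski-dense open locus of proper quadrilaterals (working, if needed, over $\mathbb{Q}$), so the polynomial vanishes identically and the identity descends to every field $\k$. A direct computation of the diagonals' direction parameters and a brute-force verification is possible but much less pleasant than the opposite-sides calculation.
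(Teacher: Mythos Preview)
Your argument is correct and, for the opposite sides and for the diagonals in the proper case, it is essentially the paper's proof: the paper also introduces the involution $\Lambda_Q$ on the line at infinity attached to the form $\Phi_Q$, checks directly that the points at infinity of $A,A'$ and of $B,B'$ are conjugate under it, and then invokes Desargues' Involution Theorem plus the two-pair determinacy of involutions to conclude that the diagonals are paired as well.

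The one place you diverge is the improper case, where your Zariski-density/polynomial-identity route works but is overkill. The paper disposes of this case in one line by a direct geometric observation: if $Q=ABA'B'$ is improper, say with $A,B,A'$ concurrent at the double vertex $p$, then the diagonal through $A\cdot B=p$ and $A'\cdot B'$ is the line $A'$ (both points lie on $A'$), and the diagonal through $B\cdot A'=p$ and $B'\cdot A$ is the line $A$. Hence the diagonals of an improper quadrilateral \emph{are} a pair of opposite sides, and their $Q$-orthogonality is already covered by your first step. This avoids any limiting or specialization argument and makes clear that nothing subtle is happening in the degenerate case.
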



\begin{proof}  Write $Q= ABA'B'$, and 
 let $\Lambda_Q$ denote the  projective transformation   on
 the line at infinity 
   given for each point $[x:y:0]$  by
 $$\Lambda_{Q}([x:y:0]) = [\beta x-\alpha y  :\gamma x - \beta y :0].$$  
  Since $\beta^2-\alpha \gamma \ne 0$ by Lemma~\ref{nondeg form}, $\Lambda_Q$ is an involution on the line at infinity.      A straightforward calculation using the expressions for $\alpha,\beta,\gamma$ in Definition~\ref{alpha def notation} shows 
     the points at infinity $[u_A:t_A:0]$ and $[u_{A'}:t_{A'}:0]$
 for $A$ and $A'$ are conjugate under $\Lambda_Q$, as are the points at infinity for $B$ and $B'$. 
 Comparing the definition of $\Lambda_Q$ with that of the inner product in Definition~\ref{alpha def notation}, it follows that 
 two lines in  $\k^2$ are $Q$-orthogonal if and only if their points at infinity   are conjugate under $\Lambda_Q$.  

%
It remains to consider the diagonals of $Q$. 
 If $Q$ is an improper quadrilateral, the diagonals of $Q$ are opposite sides of $Q$ and so the diagonals are $Q$-orthogonal. Assume $Q$ is a proper quadrilateral. 
Let $\Q$ be the quadrangle to which $Q$ belongs. 
By Lemma~\ref{des} the quadrangle $\Q$   defines an involution $\Lambda$ on the line at infinity  such that the points at infinity of any conic through the vertices of $\Q$ are conjugate, and this involution is uniquely determined by 
any two conjugate pairs. 
 Each pair of  opposite sides  of $\Q$ comprises a (degenerate) conic   passing through the vertices of $\Q$, and so the points at infinity for opposite sides of $\Q$ are conjugate under $\Lambda$. Therefore, since the points at infinity for $A$ and $A'$  are conjugate under $\Lambda$ and $\Lambda_Q$, as are the points at infinity for $B$ and $B'$, we have
 $\Lambda = \Lambda_Q$ by Lemma~\ref{des}.   
Consequently,  the diagonals of $Q$, as opposite sides of $\Q$, have points at infinity that are conjugate under $\Lambda_Q$. Hence the diagonals of $Q$ are $Q$-orthogonal. 
\end{proof}

 
 %


 
  
  \begin{remark} \label{self} With $\Lambda_Q$ as in the proof of Proposition~\ref{alpha def}, 
the points $[x:y:0]$ on the line at infinity that are conjugate to themselves correspond to  the  null vectors for $\Phi_Q$, that is,  $$\Lambda_Q([x:y:0]) = [x:y:0] \: \Longleftrightarrow \: \Phi_Q(x,y) =0.$$ Thus a line   is $Q$-orthogonal to itself if and only if its point at infinity is conjugate to itself under $\Lambda_Q$.   
  \end{remark}
 
 \begin{corollary}  \label{where needed}
If  $Q$ and $Q'$ are proper quadrilaterals sharing the same vertices, then two lines are $Q$-orthogonal if and only if they are $Q'$-orthogonal.
\end{corollary}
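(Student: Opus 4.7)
The plan is to reduce the statement to the uniqueness clause in Desargues' Involution Theorem, using the fact that the three quadrilaterals belonging to a common quadrangle all yield the same involution on the line at infinity.

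Since $Q$ and $Q'$ are proper and share the same four vertices, they both belong to the same quadrangle $\mathcal{Q}$. From the proof of Proposition~\ref{alpha def}, I would extract two facts. First, two lines in $\k^2$ are $Q$-orthogonal if and only if their points at infinity are conjugate under the involution $\Lambda_Q$ on the line at infinity; the analogous statement holds for $Q'$ and $\Lambda_{Q'}$. Second, the involution $\Lambda_Q$ coincides with the involution $\Lambda$ produced by applying Desargues' Involution Theorem (Lemma~\ref{des}) to $\mathcal{Q}$ on the line at infinity. Indeed, that proof shows that the pairs of opposite sides of $Q$ are degenerate conics through the vertices of $\mathcal{Q}$, hence their points at infinity are conjugate under $\Lambda$; since two conjugate pairs determine an involution and $\Lambda_Q$ shares these two conjugate pairs with $\Lambda$, we have $\Lambda_Q = \Lambda$.

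The same reasoning applies verbatim to $Q'$: its pairs of opposite sides are also degenerate conics through the vertices of $\mathcal{Q}$, so $\Lambda_{Q'} = \Lambda$ as well. Therefore $\Lambda_Q = \Lambda_{Q'}$, and the criterion for $Q$-orthogonality (conjugacy of points at infinity under $\Lambda_Q$) agrees with the criterion for $Q'$-orthogonality.

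There is no real obstacle here—the argument is essentially a repackaging of what is already established in the proof of Proposition~\ref{alpha def}. The one point that needs care is the invocation of Lemma~\ref{des}, which requires that the line in question (the line at infinity) not pass through a vertex of $\mathcal{Q}$; this holds because the vertices of $\mathcal{Q}$ are affine points of $\k^2$.
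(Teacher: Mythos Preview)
Your proposal is correct and follows essentially the same approach as the paper: both arguments extract from the proof of Proposition~\ref{alpha def} that $Q$-orthogonality is conjugacy under $\Lambda_Q$, and that $\Lambda_Q$ coincides with the Desargues involution of the common quadrangle $\mathcal{Q}$, whence $\Lambda_Q=\Lambda_{Q'}$. Your explicit check that the line at infinity avoids the vertices (needed for Lemma~\ref{des}) is a nice addition that the paper leaves implicit.
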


\begin{proof} 
The proof of Proposition~\ref{alpha def} shows that  $\Lambda_Q = \Lambda_{Q'}$ since the points at infinity for the pairs of opposite sides and diagonals of $Q$ are conjugate under both $\Lambda_Q$ and $\Lambda_{Q'}$.  Also, as noted in the proof of Proposition~\ref{alpha def}, two lines are $Q$-orthogonal if and only if their points at infinity are conjugate under $\Lambda_Q$. The analogous statement holds for $\Lambda_{Q'}$, and so the corollary follows.
\end{proof} 

The next proposition will be needed later when using affine transformations to reduce to simpler cases. 

\begin{proposition} \label{image orthogonal} Let $Q$ be a quadrilateral, and let $f$ be an invertible linear transformation. There is $\lambda \in \k$ such that for all ${\bf v},{\bf w} \in \k^2$,  $$\left<{\bf v},{\bf w}\right>_Q =\lambda \left< f({\bf v}),f({\bf w})\right>_{f(Q)}.$$  Thus a pair of lines $\ell_1,\ell_2$ is $Q$-orthogonal if and only if $f(\ell_1),f(\ell_2)$ is $f(Q)$-orthogonal. 
\end{proposition}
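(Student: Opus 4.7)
My plan is to reduce the statement to the characterization of $Q$-orthogonality in terms of the involution $\Lambda_Q$ on the line at infinity that was developed in the proof of Proposition \ref{alpha def}, and then upgrade this to the scalar-multiple identity using the standard uniqueness (up to scalar) of a symmetric bilinear form on $\k^2$ with a prescribed orthogonality relation.

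First I would observe that since $f$ is invertible and linear, $f(Q)$ is again a quadrilateral, and the extension $\hat f$ of $f$ to $\P^2(\k)$ fixes the line at infinity and restricts there to a projectivity. Now $\hat f$ sends the points at infinity of $A,A'$ to those of $f(A),f(A')$, and likewise for $B,B'$. Since these are conjugate pairs under $\Lambda_Q$ and $\Lambda_{f(Q)}$ respectively, the involution $\hat f\circ\Lambda_Q\circ\hat f^{-1}$ on the line at infinity agrees with $\Lambda_{f(Q)}$ on two conjugate pairs; by the uniqueness clause of Lemma \ref{des} (or the direct fact that an involution on $\P^1$ is determined by two conjugate pairs), I would conclude $\Lambda_{f(Q)}=\hat f\circ\Lambda_Q\circ\hat f^{-1}$. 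Combined with the characterization at the end of the first paragraph in the proof of Proposition \ref{alpha def}, this immediately yields the second assertion: $\ell_1,\ell_2$ are $Q$-orthogonal if and only if $f(\ell_1),f(\ell_2)$ are $f(Q)$-orthogonal.

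To produce the scalar $\lambda$, I would define the pulled-back symmetric bilinear form
\[
B({\bf v},{\bf w}):=\langle f({\bf v}),f({\bf w})\rangle_{f(Q)},
\]
which is nondegenerate by Lemma \ref{nondeg form} together with invertibility of $f$. Every nonzero vector $(u,t)\in\k^2$ is a direction vector $(u_\ell,t_\ell)$ of some line $\ell$, and $f$ sends such a line to one with direction vector $f(u,t)$ (up to the canonical normalization convention, which only introduces a nonzero scalar and so does not affect vanishing of a bilinear form). Hence the orthogonality-of-lines equivalence from the previous step translates into the equality of the orthogonality relations of $B$ and $\langle-,-\rangle_Q$ on all of $\k^2$.

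The remaining step is the linear-algebra lemma that two nondegenerate symmetric bilinear forms on $\k^2$ with the same orthogonality relation are scalar multiples of each other. I would prove it by writing each form as ${\bf v}^T M_i{\bf w}$ and observing that for every nonzero ${\bf v}$ the orthogonal complements coincide, so $M_2{\bf v}=\lambda({\bf v})M_1{\bf v}$; applying this to two linearly independent vectors and to their sum, and using invertibility of $M_1$ to conclude that the images are linearly independent, forces $\lambda$ to be constant. Applying this lemma to $\langle-,-\rangle_Q$ and $B$ yields the desired $\lambda$. The main obstacle, and the one requiring the most care, is the passage from the ``iff'' statement on lines to the scalar identity on all of $\k^2$; the subtlety is handled by the observation that direction vectors exhaust $\k^2\setminus\{0\}$ and that bilinear forms are insensitive to the normalization built into the canonical coefficients.
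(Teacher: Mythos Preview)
Your proposal is correct, and the first half (establishing $\Lambda_{f(Q)}=\hat f\,\Lambda_Q\,\hat f^{-1}$ via Lemma~\ref{des} and deducing the orthogonality equivalence for lines) matches the paper exactly. The two arguments diverge in how they extract the scalar $\lambda$.

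The paper passes to the algebraic closure and uses Remark~\ref{self}: the conjugacy of involutions implies $\Phi_{f(Q)}\circ f$ and $\Phi_Q$ have the same null vectors in $\P^1(\overline{\k})$, hence (being binary quadratic forms with nonzero discriminant by Lemma~\ref{nondeg form}) they are proportional; the parallelogram law then gives the bilinear identity. Your route stays over $\k$: you transport the full orthogonality relation back via $f$ and invoke the lemma that two nondegenerate symmetric bilinear forms on $\k^2$ with identical orthogonality relations are scalar multiples. Your approach has the small advantage of avoiding the excursion to $\overline{\k}$, at the cost of proving that auxiliary linear-algebra lemma; the paper's approach trades that lemma for the standard fact that two binary quadratic forms with the same projective zeros are proportional. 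Both are short and essentially equivalent in difficulty.
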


\begin{proof} Write $Q = ABA'B'$, and let $a,b,c,d \in \k$ such that $f(x,y) = (ax+by,cx+dy)$ for all $x,y \in \k$.  Define a map $\f_\infty$ on the line at infinity by 
\begin{center} $f_\infty([x:y:0]) = [ax+by:cx+dy:0]$ for all points $[x:y:0]$ on the line at infinity.
\end{center} 
Let $\Lambda_Q$ be the involution   from the proof of Proposition~\ref{alpha def}. Then $f_\infty \Lambda_Q f_\infty^{-1}$ is an involution on the line at infinity for which 
\begin{center} $[u_{f(A)}:t_{f(A)}:0], 
[u_{f(A')}:t_{f(A')}:0]$ 
 and $[u_{f(B)}:t_{f(B)}:0], 
[u_{f(B')}:t_{f(B')}:0]$ 
\end{center}
are conjugate pairs. Since these pairs are pairs of points at infinity of sides of the quadrilateral $f(Q)$, they are conjugate  under $\Lambda_{f(Q)}$,  and so Lemma~\ref{des} implies $\Lambda_{f(Q)} = f_\infty \Lambda_Q f_\infty^{-1}$.  Let $x,y$ be in the algebraic closure ${\overline{\k}}$ of $\k$, where at least one of $x,y$ is nonzero. 
Using Remark~\ref{self}, $\Phi_{f(Q)}(f(x,y)) = 0$ if and only if  $\Phi_Q(x,y)=0$.
 Thus $\Phi_{f(Q)} \circ f$ and $\Phi_Q$ have the same zeroes in $\P^1(\overline{\k})$, and so 
  there is $\lambda$ in the algebraic closure of $\k$  such that $\lambda \Phi_Q =  \Phi_{f(Q)} \circ f$.
Moreover,   $\lambda \in \k$    
since the coefficients of these two polynomials are in $\k$.   The proposition now follows from the parallelogram law for inner products. 
\end{proof}


\section{Basic properties of bisectors}

Bisectors of collections of conics are defined in \cite{OW4}, and  we may apply this definition to our setting. 
A line $\ell$ {\it crosses  a pair of lines} $\PP = \{\ell_1,\ell_2\}$ if it is distinct from both $\ell_1$ and $\ell_2$ and is not   parallel to both of them.  In this case, we write $\mid_{\PP}(\ell)$ for the midpoint of the two points where $\ell$ meets $\PP$. At most one of these two points may be at infinity, and if one is at infinity we define $\mid_{\PP}(\ell)$ to be the point at infinity for $\ell$. If the midpoint is not at infinity, we say it is finite. 
Figure~3 illustrates the next definition.

\begin{definition} \label{bisector def}
Let $\ell$ be a line. 
\begin{itemize}
%
\item[(1)] If ${\mathcal{P}}$ is a collection of pairs of lines, then 
 $\ell$ {\it bisects  ${\mathcal{P}}$}    if $\mid_{\PP}(\ell) = \mid_{\PP'}(\ell)$ for all  pairs $\PP,\PP' \in {\mathcal{P}}$ that $\ell$ crosses. This common midpoint  is the {\it midpoint} of the bisector~$\ell$.
 
 %
 %

\item[(2)] If $Q$ is a quadrilateral or quadrangle, then 
$\ell$ {\it bisects   $Q$} if $\ell$ bisects the 
 pairs of opposite sides of $Q$. We write $\mid_Q(\ell)$ for the midpoint of the bisector $\ell$. 

\end{itemize}

\end{definition}

    \begin{figure}[h] 
     \label{9noobconics}
 \begin{center}
\includegraphics[width=.95\textwidth,scale=.09]{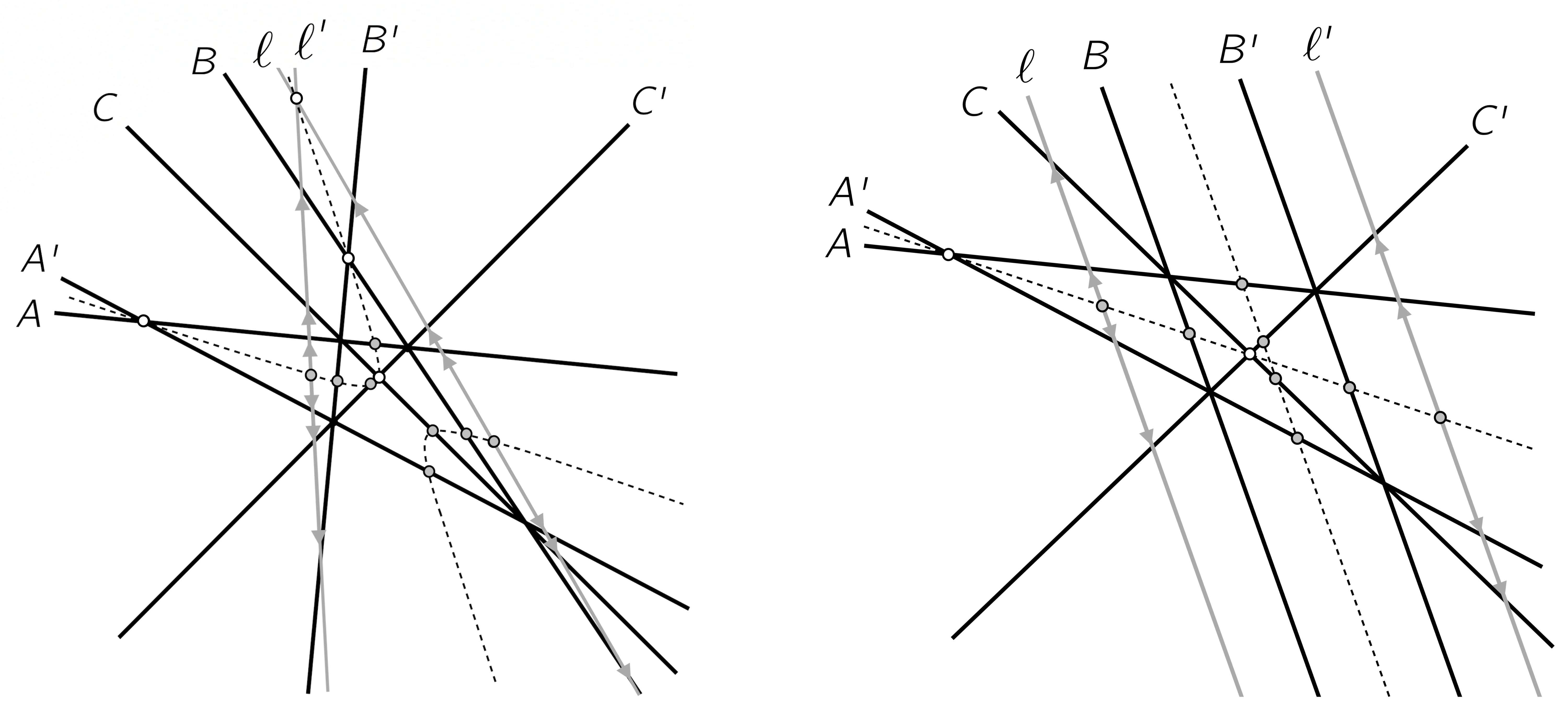} 
 \end{center}
 \caption{In each of these two arrangements,  
 the lines $\ell$ and $\ell'$ bisect the quadrangle defined by the three pairs of lines $A,A'$; $B,B'$; and $C,C'$. The arrows on $\ell$ and $\ell'$ help visualize the bisection property. The midpoints of the bisectors lie on the nine-point conic of the quadrangle and are marked in gray; see Section~5. That the figure at right has a degenerate nine-point conic is because two sides of the quadrangle are parallel; see Theorem~\ref{deg}.  }
\end{figure}

When referring to a bisector $\ell$ of a quadrilateral $Q$, we mean the line and its midpoint $\mid_Q(\ell)$.   
This point is determined by 
 any two points where the bisector crosses a pair of opposite sides of $Q$ that are not parallel to~$\ell$.  (There must be at least one such pair of opposite  sides since the vertices of $Q$ are, by definition, not at infinity.)



\begin{proposition} \label{sides} \label{parallel sides 0}
Let $Q$ be a  quadrilateral.
  \begin{enumerate}
  \item[$(1)$] The midpoint $\mid_Q(\ell)$ is   finite   for all bisectors $\ell$ of $Q$.

  \item[$(2)$]
 A line $\ell$  passing through a vertex of $Q$  bisects $Q$ if and only if it is a side of $Q$ or a diagonal of $Q$.    

\item[$(3)$] Two distinct parallel lines are bisectors of $Q$ if and only if these lines are parallel to a pair of   sides or diagonals of $Q$.   

\item[$(4)$] All lines parallel to a pair of  parallel sides or diagonals of $Q$ bisect $Q$.  

\end{enumerate}

\end{proposition}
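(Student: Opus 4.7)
The plan is to prove the four parts in the order (1), (2), (4), (3), since each later part invokes earlier ones. I will write $Q=ABA'B'$ with vertices $v_{11}=A\cap B$, $v_{12}=A\cap B'$, $v_{21}=A'\cap B$, $v_{22}=A'\cap B'$ and, in the proper case, diagonals $D_1=v_{11}v_{22}$, $D_2=v_{12}v_{21}$. For (1) I will argue by contradiction: if $\mid_Q(\ell)$ is at infinity, then $\ell$ must cross some pair of opposite sides, say $\{A,A'\}$, with $\mid_{\{A,A'\}}(\ell)$ at infinity, and by the midpoint convention this forces $\ell$ to be parallel to exactly one of $A,A'$, say $\ell\parallel A$. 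Since adjacent sides of $Q$ are not parallel, $\ell$ cannot be parallel to $B$ or $B'$, so $\ell$ crosses $\{B,B'\}$ at two finite points, yielding a finite midpoint; the common-midpoint requirement for a bisector then gives the contradiction.

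For (2) I will first check that sides and diagonals are bisectors: a side $A$ fails to cross $\{A,A'\}$ and meets $\{B,B'\}$ only at its two vertices, so the bisector condition is vacuous, while a diagonal $D$ passes through two opposite vertices that are precisely its intersections with each of the two pairs of opposite sides, giving a common midpoint. For the converse I will take a bisector $\ell$ through $v_{11}$ with $\ell\notin\{A,B\}$; by (1) its midpoint is finite, so $\ell$ meets $A'$ at some finite $w$ and $B'$ at some finite $w'$, and the bisection condition forces $w=w'=A'\cap B'=v_{22}$, making $\ell$ the diagonal $v_{11}v_{22}$. The improper case is handled analogously, using that there the diagonals coincide with a pair of opposite sides.

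The engine for both (4) and (3) is the following interpolation observation I plan to set up: if a direction $\mathbf{d}$ is not parallel to any side of $Q$, then every line $\ell_t$ in the pencil of parallel translates in direction $\mathbf{d}$ meets all four sides transversely at finite points, so the midpoints $\mid_{\{A,A'\}}(\ell_t)$ and $\mid_{\{B,B'\}}(\ell_t)$ are affine functions of the translation parameter $t$, and so is their difference, the ``bisection defect''. Consequently the defect vanishes at two distinct values of $t$ exactly when it vanishes identically. For (4), the parallel-sides case $A\parallel A'$ is immediate: a line $\ell\parallel A$ not equal to $A$ or $A'$ is parallel to both members of $\{A,A'\}$ and so does not cross that pair, while it crosses $\{B,B'\}$ by non-parallelism of adjacent sides, making bisection vacuous. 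For $D_1\parallel D_2$, I will note that a diagonal of a proper quadrilateral is never parallel to a side (otherwise the diagonal would coincide with that side), so the interpolation applies along direction $D_1$; both $D_1$ and $D_2$ are bisectors by (2) and lie in this pencil, so the defect has two distinct zeros and vanishes identically. The improper case is subsumed under parallel sides.

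For (3), the reverse direction is (4). For the forward direction, I plan the following: given distinct parallel bisectors $\ell_1,\ell_2$ with common direction $\mathbf{d}$, if $\mathbf{d}$ is the direction of a side $A$ with $A\not\parallel A'$, the argument of (1) rules out every line parallel to $A$ other than $A$ itself as a bisector, so having two such parallel bisectors forces $A\parallel A'$. Otherwise $\mathbf{d}$ is parallel to no side, the interpolation yields that every line in direction $\mathbf{d}$ bisects $Q$, and part (2) applied to the line in direction $\mathbf{d}$ through $v_{11}$ identifies it as a side or diagonal through $v_{11}$; since $\mathbf{d}$ is no side's direction, it must be $D_1$, and the same argument at $v_{12}$ identifies the analogous line as $D_2$, giving $D_1\parallel D_2$. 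I expect the main obstacle to be the parallel-diagonals case of~(4), and the single observation that unlocks both (4) and (3) is the affine structure of the bisection defect along a pencil, with $D_1,D_2$ serving as free zeros in (4) and the hypothesized parallel bisectors serving as the two zeros in (3); this replaces what would otherwise be unwieldy coordinate calculations.
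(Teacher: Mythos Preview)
Your proof is correct and follows essentially the same approach as the paper. Your ``affine bisection defect along a parallel pencil'' is precisely the paper's observation that the midpoints $\mid_{\mathsf{P}}(\ell_t)$ trace a line as $\ell_t$ varies over parallel translates, so two coincidences force coincidence everywhere; from there both you and the paper push bisectors through the vertices and invoke (2). Your treatment is more explicit than the paper's---you separate the parallel-sides case of (4) with the vacuous-crossing argument and you justify that a diagonal direction is never a side direction in the proper case---but these are elaborations of the same idea rather than a different route.
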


\begin{proof} Statement (1) follows from the fact that adjacent sides of a quadrilateral (as defined in the introduction) cannot be parallel. Statement (2) is a simple argument that depends on the fact that any bisector that goes through one vertex of a quadrilateral must go through another. For (3) and (4), use the fact that if $\PP$ is a pair of lines, and $\ell$ and $\ell'$ are distinct parallel lines not parallel to either line in $\PP$, then for every line $\ell''$ parallel to $\ell$ and $\ell'$,  the line through $\mid_{\PP}(\ell)$ and $\mid_{\PP}(\ell')$ goes through $\mid_{\PP}(\ell'')$. This fact implies that if $\ell$ and $\ell'$ are bisectors of $Q$, then every line  
 parallel to $\ell$ and $\ell'$ is a bisector of $Q$. Thus there are bisectors through the vertices of $Q$ parallel to $\ell$ and $\ell'$, and by (2), these bisectors must be sides or diagonals of $Q$.  
Statements (3) and (4) follow from these considerations. 
\end{proof}


See Figure 3 for an illustration of (3) and (4) of the lemma.

 \begin{lemma} \label{sides quad} 
  Let $\Q$ be a quadrangle.
The bisectors of $\Q$ that pass through a vertex of $\Q$ are the sides of $\Q$. 
 The bisectors  that do not pass through a vertex  are the lines whose involutions induced by $\Q$ as in Lemma~\ref{des} are reflections. 
\end{lemma}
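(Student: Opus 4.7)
The plan is to prove the two assertions in sequence. For the first, I would verify directly that each side of $\Q$ is a bisector: if $S = V_iV_j$ is a side of $\Q$, then among the three pairs of opposite sides of $\Q$ (one for each partition of the vertex set into two pairs), the pair containing $S$ is not crossed by $S$, while in each of the other two pairs $S$ meets the two lines at $V_i$ and $V_j$ respectively, producing the midpoint of $V_iV_j$ as a common midpoint. For the converse, if $\ell$ passes through a vertex $V$ of $\Q$ and bisects $\Q$, then $\ell$ also bisects any quadrilateral $Q$ belonging to $\Q$ (its two pairs of opposite sides form a sub-collection of the three pairs of opposite sides of $\Q$); Proposition \ref{sides}(2) then forces $\ell$ to be a side or a diagonal of $Q$, hence one of the six sides of $\Q$.

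For the second assertion, I would first pin down what a reflection is: an involution of $\P^1(\k)$ is a reflection precisely when it fixes the point at infinity of $\ell$, in which case its restriction to the affine part of $\ell$ has the form $x \mapsto 2c - x$ for a unique finite center $c$. I would also record the elementary observation that $\ell$ can be parallel to at most one of the three pairs of opposite sides of $\Q$, since two parallel sides drawn from different pairs would have to share a vertex.

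Now fix a line $\ell$ not passing through any vertex of $\Q$ and let $\Lambda$ denote its Desargues involution. The three degenerate conics through the vertices of $\Q$ (the three pairs of opposite sides) produce three conjugate pairs of points on $\ell$ under $\Lambda$. If $\ell$ crosses all three pairs, these conjugate pairs are finite, and $\ell$ bisects $\Q$ iff the three pairs share a common midpoint $c$ iff the reflection about $c$ swaps each pair iff, by the uniqueness of the involution from two conjugate pairs in Lemma \ref{des}, $\Lambda$ is that reflection. If instead $\ell$ is parallel to one pair $\{\ell_1,\ell_2\}$, then $\ell$ meets $\ell_1\cup\ell_2$ only at the common point at infinity of the three parallel lines, with multiplicity two, so this point is a fixed point of $\Lambda$ and $\Lambda$ is automatically a reflection; the remaining two conjugate pairs are then symmetric about the finite fixed center of $\Lambda$, so $\ell$ is automatically a bisector of $\Q$. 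Either way the stated equivalence holds.

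The main obstacle will be handling the parallel case: one must argue that when $\ell\parallel\ell_1\parallel\ell_2$ the degenerate conic $\ell_1\cup\ell_2$ meets $\ell$ at the point at infinity with multiplicity two, so that this point is forced to be a fixed point of $\Lambda$. With that in hand the uniqueness statement in Lemma \ref{des} does most of the remaining bookkeeping, and the whole second assertion reduces to recognising a reflection as an involution that fixes the point at infinity.
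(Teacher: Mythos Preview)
Your approach matches the paper's: both rest on Proposition~\ref{sides}(2) for the first assertion and on the observation that the Desargues involution $\Lambda$ swaps the intersection points of $\ell$ with each pair of opposite sides, so that $\Lambda$ is a reflection precisely when those pairs share a common midpoint. Your write-up is considerably more detailed than the paper's terse argument, and the extra care you take with the case $\ell\parallel\ell_1\parallel\ell_2$ is well placed.

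There is, however, one small gap in your case split for the second assertion. You dichotomize into ``$\ell$ crosses all three pairs'' versus ``$\ell$ is parallel to one pair'', and in the first branch you assert that the three conjugate pairs are finite. But $\ell$ may be parallel to exactly one side $A$ without being parallel to its opposite $A'$; then $\ell$ still crosses all three pairs in the paper's sense, yet one conjugate pair consists of the point at infinity of $\ell$ together with the finite point $\ell\cap A'$. In this missing case $\ell$ is not a bisector (that pair has its midpoint at infinity by the paper's convention, while the other two midpoints are finite) and $\Lambda$ is not a reflection (it sends $\infty$ to a finite point), so the equivalence still holds and the repair is a one-line remark. The paper's own proof glosses over this with the parenthetical ``(which is possibly at infinity)''.
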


\begin{proof} 
The first assertion follows from Proposition~\ref{sides}(2).
To prove the second, 
suppose a line $\ell$ bisects $\Q$ but does not pass through a vertex of $\Q$. The involution  on $\ell$ given by Lemma~\ref{des}  is determined by where $\ell$ meets pairs of opposite sides of $\Q$ (which is possibly at infinity), and so  
$\ell$ is a bisector of $\Q$ if and only if 
    this involution is a reflection. 
\end{proof}

By definition, bisection  of quadrangles requires bisection of all three pairs of opposite sides. Proposition~\ref{two three} shows it is enough to check bisection for two pairs of opposite sides.

 \begin{proposition} \label{two three}  
If $Q$ and $Q'$ are proper quadrilaterals sharing the same vertices,
 then  the bisectors of $Q$ and  $Q'$ are the same. Also, 
    $\Phi_Q = \lambda \Phi_{Q'}$ for some $\lambda \in \k$,
    and two lines are $Q$-orthogonal if and only if they are   $Q'$-orthogonal.
 
\end{proposition}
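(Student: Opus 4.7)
The plan is to combine Proposition~\ref{sides}, Desargues' Involution Theorem (Lemma~\ref{des}), and the argument already used in the proof of Corollary~\ref{where needed}. Since $Q$ and $Q'$ are proper quadrilaterals sharing the same four vertices, both belong to the same quadrangle $\Q$. The six sides of $\Q$ partition into three perfect matchings of its vertex set; for each proper quadrilateral belonging to $\Q$, two of these matchings give its pairs of opposite sides and the third gives its pair of diagonals. In particular, the six-element set consisting of the sides and diagonals of $Q$ coincides with that consisting of the sides and diagonals of $Q'$.

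For the equality of bisectors I would split into cases. By Proposition~\ref{sides}(2), a line through a vertex of $\Q$ bisects $Q$ if and only if it is a side or diagonal of $Q$; by the previous paragraph the same set of six lines is exactly the set of bisectors of $Q'$ through a vertex, so these classes agree. Now suppose $\ell$ does not pass through a vertex, and apply Desargues' Involution Theorem to obtain the involution $\Lambda$ on the projective line $\ell$, for which each of the three matchings (as a degenerate conic through the vertices of $\Q$) yields a conjugate pair on $\ell$. If $\ell$ bisects $Q$, the two conjugate pairs arising from the opposite-side matchings of $Q$ share the common (finite, by Proposition~\ref{sides}(1)) midpoint $M$, so each is conjugate under the reflection through $M$; by the uniqueness clause of Desargues, $\Lambda$ is this reflection. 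Hence every conjugate pair under $\Lambda$ is symmetric about $M$, and in particular the two matchings forming the opposite sides of $Q'$ give pairs with common midpoint $M$, so $\ell$ bisects $Q'$. The reverse inclusion is symmetric.

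For the remaining assertions, the proof of Corollary~\ref{where needed} already establishes $\Lambda_Q = \Lambda_{Q'}$ as involutions on the line at infinity. Since $\Lambda_Q$ is represented projectively by the matrix $\bigl(\begin{smallmatrix} \beta_Q & -\alpha_Q \\ \gamma_Q & -\beta_Q \end{smallmatrix}\bigr)$, equality of these projective transformations forces proportionality over $\k$ of the corresponding matrices, giving $(\alpha_Q,\beta_Q,\gamma_Q) = \lambda(\alpha_{Q'},\beta_{Q'},\gamma_{Q'})$ for some nonzero $\lambda \in \k$. Thus $\Phi_Q = \lambda\Phi_{Q'}$, and the $Q$-orthogonality relation coincides with the $Q'$-orthogonality relation (which is already Corollary~\ref{where needed}).

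The main technical care is in the Desargues step: $\ell$ may be parallel to a side of $\Q$ so that certain intersections with the matchings lie at infinity, and one opposite-side pair of $Q$ may itself consist of parallel lines. However, no two opposite-side matchings of a proper quadrilateral can share a common direction (adjacent sides cannot be parallel), so $\ell$ is parallel to at most one opposite-side matching of $Q$; the projective setting of Desargues' Theorem accommodates the resulting degenerate conjugate pairs (in which two points coincide at infinity), so the reflection-through-$M$ conclusion is unaffected.
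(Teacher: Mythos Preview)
Your proof is correct and, for the equality of bisectors, follows the paper's approach exactly: split on whether $\ell$ passes through a vertex (Proposition~\ref{sides}(2)) or not (Desargues' Involution Theorem forces the involution on $\ell$ to be a reflection, via Lemma~\ref{sides quad}). Your final paragraph handling the parallel edge cases is a bit more explicit than the paper, which simply leaves these to the projective formulation of Lemma~\ref{des}.

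For $\Phi_Q=\lambda\Phi_{Q'}$ you take a slightly different, and arguably cleaner, route. The paper argues via Remark~\ref{self}: since $\Lambda_Q=\Lambda_{Q'}$, the two forms $\Phi_Q$ and $\Phi_{Q'}$ have the same zeroes in $\P^1(\overline{\k})$ (each has two distinct zeroes by Lemma~\ref{nondeg form}), hence are proportional over $\overline{\k}$ and therefore over $\k$. You instead observe that equality of the projective transformations $\Lambda_Q=\Lambda_{Q'}$ directly forces proportionality of their representing $2\times 2$ matrices $\bigl(\begin{smallmatrix}\beta & -\alpha\\ \gamma & -\beta\end{smallmatrix}\bigr)$, which immediately gives $(\alpha_Q,\beta_Q,\gamma_Q)=\lambda(\alpha_{Q'},\beta_{Q'},\gamma_{Q'})$ with $\lambda\in\k$. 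This avoids the pass to the algebraic closure and the appeal to the discriminant; the paper's version has the mild advantage of reusing Remark~\ref{self}, but yours is shorter.
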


\begin{proof}   
By assumption,   $Q$ and $Q'$ belong to the same quadrangle $\Q$.   
To prove the first assertion, it suffices to  show that if  $\ell$ is a line that bisects a quadrilateral $Q$, then $\ell$ bisects $\Q$. Let $\ell$ be such a line. 
If $\ell$  passes through a vertex of $\Q$, then 
 $\ell$ is a side  of $\Q$   by Proposition~\ref{sides}, and hence $\ell$ is a bisector of $\Q$. If instead $\ell$ does not pass through a vertex of $\Q$, then by Lemma~\ref{des} the involution on $\ell$ induced by
 $\Q$
  is determined by the points where  two pairs of opposite sides of $\Q$   meet $\ell$. Since $\ell$ bisects these pairs and an involution on a line is determined by any two pairs of conjugate points,  this involution   is a reflection and so $\ell$ bisects $\Q$ by Lemma~\ref{sides quad}.

 To prove the rest of the proposition,  note that 
 by equation~(\ref{disc eq}) from the proof of Lemma~\ref{nondeg form}, the 
discriminant of $\Phi_Q$ is nonzero, so 
  the quadratic form $\Phi_Q$ has  two distinct zeroes in $\P^1(\overline{\k})$, where $\overline{\k}$ is the algebraic closure of $\k$.  Similarly, $\Phi_{Q'}$ has two distinct zeroes. 
  By  Corollary~\ref{where needed}, two lines are $Q$-orthogonal if and only if they are $Q'$-orthogonal, so it follows from Remark~\ref{self} that $\Phi_{Q}$ and $\Phi_{Q'}$ have the same zeroes in $\P^1(\overline{\k})$. 
    Therefore, $\Phi_Q = \lambda \Phi_{Q'}$ for some $\lambda \in \overline{\k}$. In fact, $\lambda \in \k$  since the coefficients of $\Phi_Q$ and $\Phi_{Q'}$ are in $\k$. 
\end{proof}




The next proposition, which shows that two bisectors can share the same midpoint only in a very special case, will be used often in what follows. 

\begin{proposition}   \label{unique}
Let $Q$ be a quadrilateral. Two distinct bisectors of $Q$  share the same midpoint if and only if
the  vertices of $Q$ are  the vertices of a parallelogram.
 In this case, the shared midpoint is  the centroid of~$Q$. 
\end{proposition}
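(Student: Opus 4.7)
The plan is to prove both implications using the central reflection $\sigma_m$ at the candidate common midpoint $m$ as the main tool.

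For the reverse direction, suppose the four vertices of $Q$ form a parallelogram with centroid $c$. Among the three quadrilaterals belonging to the associated parallelogram-quadrangle, $Q$ is either the parallelogram itself, or else $Q$ has one pair of opposite sides parallel and the other pair equal to the diagonals of the parallelogram, which meet at $c$. In the first case, the two diagonals of $Q$ are distinct bisectors (Proposition~\ref{sides}(2)) passing through $c$, and the parallelogram symmetry makes $c$ their common midpoint. In the second, every line through $c$ bisects both pairs with midpoint $c$: the concurrent pair trivially (both sides pass through $c$), and the parallel pair by the central symmetry of the parallelogram about $c$. Either way, two distinct bisectors share midpoint $c$.

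For the forward direction, let $\ell_1 \ne \ell_2$ be bisectors of $Q$ with common midpoint $m$. Since the midpoint of a bisector lies on the bisector, $\ell_1 \cap \ell_2 = \{m\}$. For each pair of opposite sides $\{X, X'\}$ of $Q$, the bisection condition gives $\sigma_m(\ell_i \cap X) = \ell_i \cap X'$ for $i = 1, 2$. The key structural claim is: either both of $X, X'$ pass through $m$, or $X \parallel X'$. Indeed, if $X \not\ni m$, the points $\ell_1 \cap X$ and $\ell_2 \cap X$ are two distinct points of $X$ whose $\sigma_m$-images are distinct points of $X'$, so $\sigma_m(X) = X'$, and hence $X \parallel X'$ since central reflection preserves directions. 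If instead $m \in X$, then $\ell_i \cap X = m$ forces $\ell_i \cap X' = m$ (using Proposition~\ref{sides}(1) to rule out a midpoint at infinity), giving $m \in X'$. Since not all four sides of $Q$ may be concurrent, two cases remain.

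In Case (i), both pairs of opposite sides are parallel, so $Q$ is a parallelogram; then $\sigma_m$ sends $A \to A'$ and $B \to B'$, forcing $m$ to lie on the midlines of both pairs, which intersect only at the centroid $c$, so $m = c$. Case (ii), in which one pair (say $\{A, A'\}$) meets at $m$ and the other pair $\{B, B'\}$ is parallel, is where I expect the main obstacle, since the reflection argument by itself yields no ``balance'' between $B$ and $B'$. Placing $m$ at the origin and writing $B, B'$ as horizontal lines at nonzero heights $b, b'$, a bisector through $m$ of nonzero slope $k$ crosses $\{B, B'\}$ with midpoint $\bigl((b+b')/(2k),\,(b+b')/2\bigr)$, which equals $m = 0$ only if $b + b' = 0$. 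Since the only bisector through $m$ parallel to $\{B, B'\}$ is the $x$-axis, the existence of two distinct bisectors with midpoint $m$ forces $b + b' = 0$. A direct coordinate computation then shows the four vertices $A \cap B$, $A \cap B'$, $A' \cap B$, $A' \cap B'$ form a parallelogram with centroid $m$, completing the proof.
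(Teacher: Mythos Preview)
Your proof is correct and takes a genuinely different route from the paper's. The paper's argument hinges on a uniqueness observation $(\dagger)$: for a pair of non-parallel lines and a point $p$ not at their intersection, there is exactly one line through $p$ having $p$ as its midpoint with respect to that pair. It then invokes Proposition~\ref{two three} to switch, when the vertices are not those of a parallelogram, to a quadrilateral with the same vertices and no parallel sides, so that $(\dagger)$ applies to every point; the improper case is handled by a separate argument. Your approach instead exploits the central reflection $\sigma_m$ to derive a structural dichotomy for each pair of opposite sides (both through $m$, or parallel), followed by a short coordinate computation. This avoids both the quadrilateral-switching step and any appeal to Proposition~\ref{two three}, making the argument more self-contained; the improper case is absorbed automatically, since three concurrent sides force all four through $m$ via your dichotomy, contradicting the definition of a quadrilateral.

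One minor imprecision worth patching: in the proof of the structural claim, the subcase where some $\ell_i$ is parallel to $X$ (or equals $X$) is not addressed. If $\ell_i \parallel X$ but $\ell_i \not\parallel X'$, the midpoint for that pair would be at infinity, contradicting the finiteness of $m$; hence $\ell_i \parallel X'$ as well and $X \parallel X'$. If $\ell_i = X$, use the other bisector $\ell_j \ne X$ to run the argument. With these small additions your reflection argument is complete.
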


\begin{proof}  
We will use the following observation, which can be checked with an easy calculation:
\begin{itemize}
\item[$(\dagger)$] If $\PP$ is a pair of  non-parallel lines  and $p$ is a point in the plane that is not the intersection of the lines in $\PP$, then  there is a unique line $\ell$ passing through $p$ such that $p = \mid_\PP(\ell)$.
\end{itemize}  
Write $Q = ABA'B'$, and suppose first that 
  $Q$ is an improper quadrilateral. Then the vertices of $Q$ are not the vertices of a parallelogram. Also, three sides, say $A,A',B$, meet at a point.  
    If $p$ is a point not at this intersection, then by ($\dagger$) there is at most one bisector whose midpoint is $p$. Otherwise, suppose $p$ is  the point of intersection of $A,A'$ and $B$. 
    %
  If there are two bisectors with midpoint $p$, then
  these two bisectors are sides or diagonals  of $Q$
    by Proposition~\ref{sides}(2) and hence must be among $A,A'$ and $B$. Thus at least one of $A$ or $A'$ has  midpoint $p$, say $A$,  and so  
     $A$ bisects $B,B'$ with midpoint $p$. Since $B$ passes through $p$, this implies $B'$ also passes through $p$,  a contradiction to the fact that not all four sides of $Q$ go through a point. Thus if $Q$ is an improper quadrilateral, the vertices of $Q$ are not the vertices of a parallelogram and no two bisectors share a midpoint.

Now suppose $Q$ is a proper quadrilateral.
 If the vertices of $Q$ are not the vertices of a parallelogram, then there is a quadrilateral $Q'$  that has the same vertices as  $Q$ such that no sides of $Q'$ are parallel.  Thus if $p$ is a point in the plane, there is a pair of non-parallel opposite sides of $Q'$ such that $p$ is not the intersection of these two lines. and so by ($\dagger$), there is at most one bisector having  $p$ as its midpoint.  
Otherwise, if the vertices of $Q$ are the vertices of a parallelogram $P$, then any line through the center $p$ of $P$ is a bisector of $P$ having midpoint $p$. Since $Q$ and $P$ belong to the same quadrangle, Proposition~\ref{two three} implies these lines through $p$ are bisectors of $Q$ having midpoint $p$. 

Finally, if $\ell$ and $\ell'$ are different bisectors of $Q$ with the same midpoint $p$, then, as we have shown, the vertices of $Q$ are the vertices of a parallelogram $P$ and the centroid of $Q$ is the center of $P$.
By Proposition~\ref{two three}, $\ell$ and $\ell'$ bisect the diagonals of the parallelogram $P$, and so by ($\dagger$), since $\ell$ and $\ell'$ are distinct, $p$ must lie on the intersection of the diagonals, and hence $p$ is the center of $P$, which is the centroid of~$Q$.
%
\end{proof}

 Proposition~\ref{unique} is not true for the more general quadrilaterals  considered in \cite{OW4}, those quadrilaterals that can have a vertex at infinity. See   \cite[Figure 2(c)]{OW4}.






%






\section{Quadrilaterals in standard form}

For use in later sections, 
we given an equational description of all the  bisectors of a quadrilateral $Q$ in what we call standard form, a simpler case we can always reduce to via an affine transformation if $Q$ is not a parallelogram.    

 
 \begin{definition} \label{standard def}
A quadrilateral $Q =ABA'B'$ is in {\it standard form} if $A$ is the line $Y=0$ and  $A'$ is the line $X=0$. 
\end{definition}

If $Q$ is a quadrilateral that is not a parallelogram, then $Q$ has a pair of opposite sides, say $A,A'$, that are not parallel, and so there is an affine transformation that carries $A$ and $A'$ onto the axes of $\k^2$.    Thus every quadrilateral that is not a parallelogram can be transformed into a quadrilateral in standard form. We use this often in the next two sections. 


If the quadrilateral $Q=ABA'B'$ is in standard form, then neither of the lines $B$ or $B'$ is parallel to the $X$ or $Y$-axis, and so $u_B=u_{B'} =1$ (with our notational convention from the Introduction), which implies the slopes of the lines $B$ and $B'$ are $t_B$ and $t_{B'}$, respectively. The product  $t_Bt_{B'}$ of these slopes is a fundamental invariant of $Q$. 

\begin{definition} The {\it coefficient} of a quadrilateral $Q = ABA'B'$ in standard form is the product $\mu = t_Bt_{B'}$ of the slopes of $B$ and $B'$.  
\end{definition} 

The coefficient   is never $0$ since the slopes of $B$ and $B'$ are not $0$. 
In standard form, $Q$-orthogonality is easy to detect in terms of the coefficient of $Q$:


 \begin{lemma}  \label{standard phi} 
If a quadrilateral $Q$ is in standard form with coefficient $\mu$, 
 then  $$\Phi_Q(X,Y) = Y^2 - \mu X^2.$$  
Moreover, two lines $\ell$ and $\ell'$ not parallel to the $X$ or $Y$-axes are $Q$-orthogonal if and only if the product of their slopes is    $\mu$.  

\end{lemma}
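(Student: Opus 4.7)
The plan is to prove both statements by direct substitution into Definition \ref{alpha def notation}, since the standard form pins down almost all of the coordinate data up to a sign convention.

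First, I would record the canonical coefficients of the four sides of $Q = ABA'B'$. The line $A$ given by $Y = 0$ has $t_A = 0$, $u_A = 1$, $v_A = 0$, while the line $A'$ given by $X = 0$ has $t_{A'} = 1$, $u_{A'} = 0$, $v_{A'} = 0$ under the convention from the introduction. Since $B$ and $B'$ are opposite sides of $A$ and $A'$ respectively, and adjacency forbids $B$ or $B'$ from being parallel to either axis, we have $u_B = u_{B'} = 1$, so that $t_B$ and $t_{B'}$ are the slopes of $B$ and $B'$. In particular $\mu = t_B t_{B'}$ by definition of the coefficient.

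Next I would plug these values into the formulas for $\alpha$, $\beta$, $\gamma$ in Definition \ref{alpha def notation}. Because $t_A = u_{A'} = 0$, most terms vanish, and one finds immediately
$\alpha = u_A u_B t_{A'} u_{B'} = 1$, $\beta = 0$, and $\gamma = -u_A t_B t_{A'} t_{B'} = -\mu$. Substituting into $\Phi_Q(X,Y) = \gamma X^2 - 2\beta XY + \alpha Y^2$ yields $\Phi_Q(X,Y) = Y^2 - \mu X^2$, which is the first claim.

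For the second claim, let $\ell,\ell'$ be lines not parallel to the $X$- or $Y$-axes, so that $u_\ell = u_{\ell'} = 1$ and the slopes of $\ell,\ell'$ are $t_\ell,t_{\ell'}$. Using the matrix form of $\langle -,-\rangle_Q$ together with the values $\alpha = 1$, $\beta = 0$, $\gamma = -\mu$ just computed, a one-line calculation gives
\[
\langle (u_\ell,t_\ell),(u_{\ell'},t_{\ell'})\rangle_Q \;=\; \gamma u_\ell u_{\ell'} + \alpha t_\ell t_{\ell'} \;=\; t_\ell t_{\ell'} - \mu,
\]
so $\ell$ and $\ell'$ are $Q$-orthogonal exactly when $t_\ell t_{\ell'} = \mu$. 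There is no real obstacle here: the whole lemma is a direct unpacking of Definition \ref{alpha def notation} in the standard-form coordinates, and the only thing requiring care is the sign convention for $t_A, u_A, t_{A'}, u_{A'}$ so that the vanishings that kill all but one summand of each of $\alpha, \beta, \gamma$ are applied correctly.
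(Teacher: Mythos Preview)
Your proof is correct and follows essentially the same approach as the paper's: both proceed by direct substitution of the standard-form data into Definition~\ref{alpha def notation} to compute $\alpha=1$, $\beta=0$, $\gamma=-\mu$, and then read off the $Q$-orthogonality condition. Your write-up is actually more explicit about the canonical coefficients $t_A,u_A,t_{A'},u_{A'}$ than the paper's own proof.
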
 
 


\begin{proof} 
Write $Q=ABA'B'$, where the sides of $Q$  are as in Definition~\ref{standard def}.  Substituting the data for $A$, $B$, $A'$ and $B'$ into the definitions
of $\alpha,$ $\beta$ and $\gamma$ in Definition~\ref{alpha def notation}, we obtain $\alpha =1, \beta=0, \gamma =-t_Bt_{B'}=-\mu$, and so $\Phi_Q(X,Y) = \gamma X^2 - 2\beta XY +\alpha Y^2 =  Y^2-\mu X^2.$
Also, 
 since $u_{\ell}=u_{\ell'}=1$, the lines $\ell$ and $\ell'$ are $Q$-orthogonal if and only if $t_\ell t_{\ell'} -\mu =0$.  \end{proof} 

The next proposition  gives an equational description of all of the bisectors of a quadrilateral in standard form.  

\begin{proposition} \label{brand new1}  \label{brand new2} Let $Q$ be a quadrilateral in standard form with   centroid $(h,k)$ and coefficient~$\mu$.
\begin{itemize}
\item[$(1)$] A line $\ell$
is a bisector of $Q$ with midpoint $(p,q) \ne (0,0)$ if and only if 
\begin{center}$q(q-2k)-\mu p(p-2h)=0$ and $\ell$ has equation $qX+pY-2pq=0$. \end{center} 

\item[$(2)$]  If  $(h,k) = (0,0)$, every line through $(0,0)$ is a bisector of $Q$ with   midpoint $(0,0)$. 

  \item[$(3)$] If   $(h,k) \ne (0,0)$,  then the unique bisector whose midpoint is $(0,0)$ is $kY+\mu hX=~0$.


\end{itemize}

\end{proposition}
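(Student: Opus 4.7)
The plan is to parametrize each bisector by its midpoint and reduce the bisection condition to an algebraic identity. Writing $B: Y = t_B X + v_B$ and $B': Y = t_{B'} X + v_{B'}$, a direct computation of the vertices of $Q$ and of the midpoints of the opposite sides $A$ and $A'$ yields the centroid formulas $v_B + v_{B'} = 4k$ and $v_B t_{B'} + v_{B'} t_B = -4\mu h$. These supply the bridge between the raw computation and the desired expression in $(h, k, \mu)$.

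For part (1), suppose $\ell$ is a bisector with midpoint $(p, q) \neq (0, 0)$. Since the midpoint of a bisector lies on the bisector, $(p, q) \in \ell$, and the requirement that the midpoint of the intersections of $\ell$ with $\{A, A'\}$ be $(p, q)$ forces $\ell$ to meet $A$ at $(2p, 0)$ and $A'$ at $(0, 2q)$. The equation $qX + pY = 2pq$ then follows. The edge cases $p = 0$ and $q = 0$ correspond to $\ell$ being $A'$ or $A$, which by Proposition~\ref{sides}(2) are bisectors, and a direct check shows their midpoints $(0, 2k)$ and $(2h, 0)$ satisfy the stated equation. Imposing bisection of $\{B, B'\}$ amounts to intersecting $\ell$ with $B$ and $B'$, summing the coordinates and setting the result equal to $(2p, 2q)$; clearing denominators and applying the centroid identities above collapses the condition to $q(q - 2k) - \mu p(p - 2h) = 0$.

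For part (2), the hypothesis $(h, k) = (0, 0)$ combined with the centroid formulas gives $v_B + v_{B'} = 0$ and $v_B(t_{B'} - t_B) = 0$. Since not all four sides of $Q$ can pass through a common point, $v_B \neq 0$, forcing $t_B = t_{B'}$ and $v_{B'} = -v_B$; in particular $B \parallel B'$. Any line $\ell$ through the origin meets both $A$ and $A'$ at the origin, so its midpoint with respect to $\{A, A'\}$ is $(0, 0)$, and the resulting expressions $B: Y = t_B X + v_B$ and $B': Y = t_B X - v_B$ make the midpoint with respect to $\{B, B'\}$ also $(0, 0)$ by a symmetric computation. The cases $\ell = A$ and $\ell = A'$ are immediate.

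For part (3), a bisector with midpoint $(0, 0)$ passes through the origin. Parameterizing $\ell: Y = mX$ and imposing that the midpoint with respect to $\{B, B'\}$ is the origin gives the single linear relation $mk + \mu h = 0$, yielding $\ell: kY + \mu h X = 0$; uniqueness is immediate from Proposition~\ref{unique}, since under the hypothesis the vertices of $Q$ do not form a parallelogram. The degenerate subcase $k = 0$ forces $\ell$ to be the vertical line $A'$, which is still captured by the equation $kY + \mu h X = 0$. The principal obstacle throughout is the bookkeeping around degenerate cases---vertical $\ell$, $\ell$ coinciding with $A$ or $A'$, and the parallelogram-vertex configuration---though no new ideas are needed.
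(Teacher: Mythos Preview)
Your approach mirrors the paper's almost exactly: derive the centroid identities $v_B+v_{B'}=4k$ and $t_{B'}v_B+t_Bv_{B'}=-4\mu h$, pin down the line through $(2p,0)$ and $(0,2q)$, and reduce the bisection of $\{B,B'\}$ to the quadratic relation in $(p,q)$. Parts~(2) and~(3) also track the paper's argument closely.

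There is, however, one genuine omission. In part~(1) your computation ``intersecting $\ell$ with $B$ and $B'$, summing the coordinates \ldots\ clearing denominators'' silently assumes $\ell$ is parallel to neither $B$ nor $B'$; the denominators $pt_B+q$ and $pt_{B'}+q$ must be nonzero. The paper treats the case $\ell\parallel B$ (and symmetrically $\ell\parallel B'$) separately and shows that, with $q=-pt_B$, the expression $q(q-2k)-\mu p(p-2h)$ factors as $\tfrac{p}{2}(t_B-t_{B'})(2pt_B+v_B)$, so the relation holds precisely when $t_B=t_{B'}$ (i.e.\ $B\parallel B'$, whence $\ell$ is automatically a bisector by Proposition~\ref{sides}(4)) or $2pt_B+v_B=0$ (i.e.\ $\ell=B$). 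Your list of degenerate cases at the end does not include this one, and it is not subsumed by the others. The same issue recurs in your part~(3) derivation of $mk+\mu h=0$, which likewise presumes $m\neq t_B,t_{B'}$.

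A smaller point on part~(3): your uniqueness appeal to Proposition~\ref{unique} is phrased as ``since under the hypothesis the vertices of $Q$ do not form a parallelogram,'' but that implication is Corollary~\ref{vert}, which is proved \emph{after} this proposition. The non-circular route, which the paper uses, is to note that Proposition~\ref{unique} says any shared midpoint of two bisectors must be the centroid; since $(h,k)\neq(0,0)$, the origin cannot serve.
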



\begin{proof} (1) Write $Q = ABA'B'$. 
Since $A$ is the line $Y=0$ and $A'$ the line $X=0$, 
 the unique line
$\ell$ passing through $(p,q)$ that intersects $A$ and $A'$ at points that have $(p,q)$ as their midpoint is the line passing through $(0,2q)$ and $(2p,0)$. This line $\ell$ has equation $qX+pY-2pq=0$, and if $\ell$ is distinct from $A$ and $A'$, the line $\ell$ is the unique bisector of $A,A'$ with midpoint $(p,q)$.   
%
%
%
Thus  it suffices to show:
 \begin{itemize}
 \item[($\star$)] {\it The  line $qX+pY-2pq=0$   bisects $Q$ with midpoint $(p,q) $ if and only if $q(q-2k)-\mu p(p-2h)=0$.}  \end{itemize}

We first calculate the centroid of $Q$. Since $Q$ is in standard  form, the equations for $B$ and $B'$ are given by $Y=t_BX+v_B$ and $Y = t_{B'}X+v_{B'}$, with $t_Bt_{B'} =\mu$. 
The vertices of the quadrilateral $Q$ are 
 \begin{eqnarray} \label{vertices}
 A \cdot B = \left(-\frac{t_{B'}v_B}{\mu},0\right), \: B \cdot {A'} = (0,v_B), \: {A'} \cdot {B'} = (0,v_{B'}), \: {B'} \cdot A = \left(-\frac{t_Bv_{B'}}{\mu},0\right).
 \end{eqnarray}  Therefore,  since $(h,k)$ is  the mean of the four vertices of $Q$, 
 \begin{eqnarray} \label{centroid eq}
 -4h\mu = t_{B'}v_B + t_Bv_{B'} \: {\mbox{ and  }} \: 4k = v_B+v_{B'}.
 \end{eqnarray}


Now we verify $(\star)$. 
Suppose first  $\ell$ is not parallel  to $B$ or $B'$. 
 Then $pt_B + q\ne 0$ and $pt_{B'} + q\ne 0$, and so the intersections of $\ell$ with $B$ and $B'$ are the points 
 $$\ell \cdot B = \left(\frac{p(2q-v_B)}{pt_B+q},\frac{q(2pt_B+v_B)}{pt_B+q}\right) \:\: \:\: 
 \ell \cdot B' = \left(\frac{p(2q-v_{B'})}{pt_{B'}+q},\frac{q(2pt_{B'}+v_{B'})}{pt_{B'}+q}\right)$$
The line $\ell$ bisects the pair $B,B'$ with midpoint $(p,q)$ if and only if $(p,q)$ is the midpoint of $\ell \cdot B$ and $\ell \cdot B'$. Using this observation, the expressions for $h$ and $k$ in (\ref{centroid eq}), and the fact that $t_Bt_{B'}=\mu $ and $(p,q) \ne (0,0)$, 
 a  midpoint calculation shows $\ell$ bisects the pair $B,B'$ with midpoint $(p,q)$  if and only if 
$q(q-2k)-\mu p(p-2h)=0.$ This verifies the proposition in the case in which $\ell$ is not parallel to $B$ or $B'$. 
 
Now suppose  $\ell$ is parallel to $B$ or $B'$.  
 We give the argument for the case in which $\ell$ is  parallel to $B$. The case in which $\ell$ is parallel to $B'$ is similar. 
 Since by assumption  $qX+pY-2pq$  is parallel to $B$, and 
 $B$ is not parallel to $X=0$ or $Y=0$, it follows that 
 $p \ne 0$ and $q \ne 0$. 
 The slope of $\ell$ is $-q/p$, so $q=-pt_B$.  
 Using this fact, the assumption that $t_Bt_{B'}=\mu$ and the expressions for $h$ and $k$ from (\ref{centroid eq}), we obtain
  $$q(q-2k)-\mu p(p-2h) 
  = \frac{p}{2}(t_B-t_{B'})(2pt_B+v_B).$$
 Since $p \ne 0$,   
 \begin{center}
$q(q-2k)-\mu p(p-2h) =0 \:\: \Longleftrightarrow \:\: t_B=t_{B'}$ or $2pt_B+v_B=0$.
 \end{center}
To complete the proof of ($\star$), it suffices to show (still under the assumption that $\ell$ is parallel to $B$) that the line $qX+pY-2pq=0$  bisects $Q$ with midpoint $(p,q)$ if and only if  $t_B=t_{B'}$ or $2pt_B+v_B=0$.

If $\ell = B$, then since $\ell$ goes through the point $(2p,0)$ and $B$ has equation $Y=t_BX+v_B$, it follows that $2pt_B + v_B =0$.
If instead
 $\ell \ne B$ and  $\ell$ 
bisects $B,B'$ with midpoint $(p,q)$, then since $\ell$ is  a bisector distinct from $B$ and parallel to $B$,  Proposition~\ref{sides}(3) implies $B$ is parallel to $B'$  so that $t_B=t_{B'}$.
 
 Conversely, still under the assumption that $\ell$ is parallel to $B$, suppose  $t_B=t_{B'}$ or $2pt_B+v_B=0$. In the former case, $B$ is parallel 
  to $B'$ and so since $\ell$ is parallel to $B$ and $B'$,   $\ell$ bisects $Q$ with  midpoint $(p,q)$. In the  case in which $2pt_B+v_B=0$,   the parallel lines $\ell$ and $B$ share the point $(2p,0)$ and hence are equal. Thus $\ell$ bisects $B,B'$ with  midpoint $(p,q)$. This completes the proof of ($\star$). 


(2)  Since $Q$ is in standard form and all four lines do not go through the same point, it cannot be that $v_B = v_{B'} = 0$. 
Thus if   $(h,k) =(0,0)$, then   equations  (\ref{centroid eq}) imply $v_B = -v_{B'}$ and $t_B = t_{B'}$, which in turn implies $B$ and $B'$ are parallel lines with $(0,0)$ lying on their midline\footnote{The {\it midline} of a pair of parallel lines $L$ and $L'$  is the line consisting of the midpoints of the points on $L$ and the points on $L'$, i.e., it is the line midway between the two parallel lines.}, and so every line through the origin bisects the pair $B,B'$ with the origin as its midpoint.  Since the origin is the intersection of $A$ and $A'$,   every line through the origin bisects $Q$ with the origin as its midpoint.

(3)  Suppose  $(h,k) \ne (0,0)$.  
We show the line $\ell$ defined by $kY+\mu hX=0$ bisects $Q$ with midpoint $(0,0)$. Once this is proved, the fact that $\ell$ is the unique bisector with midpoint $(0,0)$ follows  from Proposition~\ref{unique}.  Note that the   line $\ell$ bisects $A,A'$ with midpoint $(0,0)$ since $A$ and $A'$ meet at the origin. Thus it remains to show $\ell$ bisects the pair $B,B'$ with midpoint $(0,0)$. 
If $B$ is parallel to $B'$, then using the expressions for $h$ and $k$  from equations (\ref{centroid eq}), we have 
 $-4 \mu h = t_Bv_{B'}+t_{B'}v_B= 4 t_Bk$. Thus $-\mu h = t_B k$, 
 proving that 
 $\ell$ is parallel to $B$ and $B'$. 
 In this case, 
 $\ell$ bisects $Q$ with midpoint $(0,0)$ because $\ell$ bisects $A,A'
 $ with midpoint $(0,0)$ and is parallel to $B$ and $B'$, and so the claim is proved. 
 
 Otherwise,  
  $B$ is not parallel to  $B'$. 
  If $\ell$ is $B$ or $B'$, then $\ell$ clearly bisects $Q$, so we assume $\ell$ is neither of these lines. By Proposition~\ref{sides}(3), $\ell$ is not parallel to $B$ or $B'$ since these last two lines are not parallel to each other. Therefore, $\mu h+t_Bk \ne 0$ and $\mu h+t_{B'}k \ne 0$, and so  $\ell$ intersects $B$ and $B'$ in the points
$$\ell \cdot B = \left(-\frac{kv_B}{\mu h+t_Bk}, \frac{v_B \mu h}{\mu h+t_Bk}\right), \:\: \ell \cdot B' =
\left(-\frac{kv_{B'}}{\mu h+t_{B'}k}, \frac{v_{B'} \mu h}{\mu h+t_{B'}k}\right).$$
Equations  (\ref{centroid eq})  imply
 $(0,0)$ is the midpoint of these two points, and so the line $\ell$ 
  bisects $B,B'$ with midpoint $(0,0)$. This proves the line $kY+\mu hX=0$ bisects $Q$. 
 \end{proof}
 
 \begin{corollary} \label{vert} \label{brand new cor}  If $Q$ is a quadrilateral in standard form,
 then 
the centroid of $Q$ is the origin 
  if and only if the vertices of $Q$ are the vertices of a parallelogram.
  \end{corollary}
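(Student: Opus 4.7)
The plan is to derive this directly from Proposition \ref{brand new1} together with the uniqueness result Proposition \ref{unique}; there is essentially no fresh calculation to do.

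For the forward direction, I would assume the centroid is $(0,0)$ and invoke Proposition \ref{brand new1}(2), which says that in this case every line through the origin is a bisector of $Q$ with midpoint the origin. In particular $Q$ has (many) pairs of distinct bisectors that share a midpoint, so Proposition \ref{unique} forces the vertices of $Q$ to be the vertices of a parallelogram.

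For the converse, suppose the vertices of $Q$ form a parallelogram. Then Proposition \ref{unique} guarantees the existence of two distinct bisectors of $Q$ sharing a common midpoint, and moreover that this common midpoint is the centroid $(h,k)$ of $Q$. On the other hand, Proposition \ref{brand new1}(1) says that for any candidate midpoint $(p,q) \ne (0,0)$ the bisector with that midpoint is uniquely determined (it must be the line $qX+pY-2pq=0$). These two facts are compatible only if $(h,k)=(0,0)$, giving the desired conclusion.

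The main "obstacle," such as it is, is simply recognizing that the implicit uniqueness built into Proposition \ref{brand new1}(1) — namely that a nonzero midpoint determines at most one bisector — is precisely the statement that converts the parallelogram condition of Proposition \ref{unique} into the vanishing of the centroid. No further case analysis or computation with the vertex formulas \eqref{vertices} and \eqref{centroid eq} is required.
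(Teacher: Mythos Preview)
Your argument is correct. For the direction ``centroid at origin $\Rightarrow$ parallelogram'' you and the paper proceed identically, via Proposition~\ref{brand new1}(2) and Proposition~\ref{unique}.

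For the converse you take a different route. The paper argues directly: since $Q$ is in standard form, $A$ and $A'$ are the coordinate axes and hence not parallel; so if the vertices of $Q$ form a parallelogram $P$, the lines $A,A'$ must be its diagonals, whose intersection (the origin) is the center of $P$, i.e.\ the centroid of $Q$. You instead pull the conclusion out of Propositions~\ref{unique} and~\ref{brand new1}(1): Proposition~\ref{unique} supplies two distinct bisectors sharing the centroid as midpoint, while Proposition~\ref{brand new1}(1) forbids a nonzero midpoint from supporting more than one bisector, forcing $(h,k)=(0,0)$. Your route is a clean contrapositive-style argument that stays entirely within the bisector machinery already built; the paper's route is shorter and more geometric, exploiting the specific standard-form placement of $A$ and $A'$ rather than invoking the heavier uniqueness proposition twice.
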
 
 
 \begin{proof} 
If the vertices of $Q$ are the vertices of a parallelogram $P$, 
then since $Q$ is in standard form and $A$ and $A'$ are not parallel, it follows that $A$ and $A'$ are the diagonals of $P$ and hence the origin (the intersection of $A$ and $A'$) is the center of $P$, which is the centroid of $Q$ since $P$ and $Q$ share the same vertices.   
The converse follows from Propositions~\ref{unique} and~\ref{brand new2}(2).
\end{proof}



%

The last corollary shows one of the main advantages of reducing to standard form. 

\begin{corollary} \label{same same} If two quadrilaterals in standard form share the same coefficient and the same centroid, then   both quadrilaterals have the same bisectors. 
\end{corollary}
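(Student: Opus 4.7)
The plan is to observe that Proposition \ref{brand new1} gives a complete equational description of the bisectors of a standard-form quadrilateral purely in terms of the coefficient $\mu$ and the centroid $(h,k)$, with no further reference to the sides $B,B'$. So the strategy is just to read off that all three cases of that proposition depend only on the data $(\mu,(h,k))$, and then apply it to both $Q$ and $Q'$.

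More concretely, let $Q$ and $Q'$ be in standard form with shared coefficient $\mu$ and shared centroid $(h,k)$. First I would handle the bisectors whose midpoint is nonzero: by Proposition \ref{brand new2}(1), a line $\ell$ is a bisector of $Q$ with midpoint $(p,q)\ne(0,0)$ if and only if
\[
q(q-2k)-\mu p(p-2h)=0 \quad\text{and}\quad \ell:\ qX+pY-2pq=0.
\]
The very same biconditional characterizes bisectors of $Q'$ with nonzero midpoint, since the right-hand side mentions only $\mu$, $h$, $k$. Hence the bisectors of $Q$ with nonzero midpoint coincide with those of $Q'$.

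Next I would handle the midpoint $(0,0)$ case by splitting on whether $(h,k)=(0,0)$. If $(h,k)=(0,0)$, Proposition \ref{brand new2}(2) applied to both $Q$ and $Q'$ says every line through the origin is a bisector of each, so the two sets of bisectors with midpoint $(0,0)$ coincide. If $(h,k)\ne(0,0)$, then Proposition \ref{brand new2}(3) applied to both $Q$ and $Q'$ identifies the unique bisector with midpoint $(0,0)$ as $kY+\mu hX=0$, which again is the same line for $Q$ and $Q'$.

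I do not expect any real obstacle: the work is all in Proposition \ref{brand new1}, and this corollary is essentially a bookkeeping statement that the characterization there depends on $Q$ only through $(\mu,(h,k))$. The only mild subtlety is remembering to treat separately the bisectors with midpoint at the origin, since those are governed by parts (2) and (3) rather than part (1); the case split on whether $(h,k)=(0,0)$ dispatches them cleanly.
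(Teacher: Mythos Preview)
Your proposal is correct and takes essentially the same approach as the paper: the paper's proof is a one-liner observing that the description of bisectors in Proposition~\ref{brand new1} depends only on the centroid and coefficient, and you have simply unpacked that observation into its three constituent cases. The extra care you take in splitting on whether $(h,k)=(0,0)$ for the midpoint-at-origin case is exactly the content the paper is alluding to but leaves implicit.
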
 

\begin{proof} This follows from Proposition~\ref{brand new1} since the description of the bisectors in this proposition depends only on the centroid and  coefficient of the quadrilateral.
\end{proof}

The converse of the corollary is also true. This is proved in \cite{OW6} using different methods. 
   
 
 \section{The bisector locus}

In this section we describe the
 {\it bisector locus} of a quadrilateral $Q$ (or quadrangle $\Q$), which is  the locus of midpoints of the bisectors of $Q$ (or $\Q$).  By Proposition~\ref{two three}, the bisector locus of a quadrangle is the bisector locus of any quadrilateral belonging to the quadrangle. 
   For the sake of stating the  next theorem, we adapt the notion of a diagonal point of a quadrangle to that of a quadrilateral: A {\it diagonal point} of a quadrilateral $Q$ is a point that is the intersection of a pair of opposite sides of $Q$ or the diagonals of $Q$.  Since
   the quadratic form $\Phi_Q$ can be viewed as a squared norm for the inner product space defined in Definition~\ref{alpha def notation}
 and determined by $Q$, 
   the idea behind the next  theorem is that the bisector locus  is either a circle or a pair of lines in the geometry of $Q$.

\begin{theorem} \label{nine theorem} Let $Q$ be a quadrilateral, let $(h,k)$ be the centroid of $Q$ and let $(a,b)$ be a diagonal point of $Q$ that is not at infinity. 
 The bisector locus of   $Q$ is  given by   $$\Phi_Q(X-h,Y-k) = \Phi_Q(a-h,b-k).$$     Thus the bisector locus is a conic with center $(h,k)$. 
\end{theorem}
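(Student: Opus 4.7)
The plan is to reduce, via an affine transformation, to a quadrilateral in standard form, then invoke Proposition~\ref{brand new1} to pin down the bisector locus directly, with the parallelogram case treated separately using Remark~\ref{self}. First suppose $Q$ is not a parallelogram. Then some pair of opposite sides of $Q$ is non-parallel, so there is an invertible affine transformation $f$ sending $Q$ to a quadrilateral $\widetilde Q$ in standard form. By Proposition~\ref{image orthogonal}, together with the fact that $\Phi_Q$ depends only on the slopes of the sides of $Q$ and hence is unaffected by translations, the equation $\Phi_Q(X-h, Y-k) = \Phi_Q(a-h, b-k)$ transforms under $f$, up to a nonzero scalar, into the analogous equation for $\widetilde Q$; since bisectors, midpoints, the centroid, and diagonal points all transform naturally, the theorem for $Q$ reduces to the theorem for $\widetilde Q$.

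In standard form with coefficient $\mu$, Lemma~\ref{standard phi} gives $\Phi_{\widetilde Q}(X, Y) = Y^2 - \mu X^2$, and a short completion-of-square identity converts the defining equation $q(q - 2k) - \mu p(p - 2h) = 0$ from Proposition~\ref{brand new1}(1) into $\Phi_{\widetilde Q}(p - h, q - k) = \Phi_{\widetilde Q}(-h, -k)$. Combined with Proposition~\ref{brand new1}(2)--(3), which supplies $(0, 0)$ as a bisector midpoint (trivially satisfying the same equation), this shows the bisector locus of $\widetilde Q$ is exactly $\{\Phi_{\widetilde Q}(X-h, Y-k) = \Phi_{\widetilde Q}(-h, -k)\}$. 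Because $(0, 0) = A \cdot A'$ is a finite diagonal point, the theorem's equation already holds for the choice $(a, b) = A \cdot A'$.

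It then remains to check that each other finite diagonal point is also a bisector midpoint, hence gives the same constant. For $(a, b) = B \cdot B'$ with $B \not\parallel B'$: $(a, b) \ne (0, 0)$ (else all four sides would concur at the origin), so observation~$(\dagger)$ in the proof of Proposition~\ref{unique} produces a unique line $\ell$ through $(a, b)$ that bisects $\{A, A'\}$ with midpoint $(a, b)$; because $\ell$ meets both $B$ and $B'$ at the coincident point $(a, b)$, it also bisects $\{B, B'\}$ with midpoint $(a, b)$. For $(a, b) = D_1 \cdot D_2$, I would apply the same argument to a companion quadrilateral $Q_2$ belonging to the same quadrangle as $Q$ and for which $D_1, D_2$ form a pair of opposite sides, using Proposition~\ref{two three} to identify the bisector loci of $Q$ and $Q_2$. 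This last step, which exploits the threefold symmetry of the underlying quadrangle to reduce the diagonals-intersection case to the already-handled $B \cdot B'$ case, is the main technical move in the proof.

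Finally, if $Q$ is a parallelogram then $A \cdot A'$ and $B \cdot B'$ both lie at infinity and the only finite diagonal point is $D_1 \cdot D_2 = (h, k)$, so the theorem's equation reduces to $\Phi_Q(X-h, Y-k) = 0$. By Proposition~\ref{alpha def} together with Remark~\ref{self}, the common direction of each parallel pair of opposite sides is self-conjugate under $\Lambda_Q$ and hence a null direction of $\Phi_Q$, so $\Phi_Q(X-h, Y-k) = 0$ cuts out the two midlines through $(h, k)$ parallel to the sides. A direct check via Propositions~\ref{sides}(4) and~\ref{unique} then confirms that these two midlines are exactly the set of bisector midpoints.
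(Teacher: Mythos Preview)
Your argument is correct and shares the same computational core as the paper's proof: reduce to standard form, invoke Proposition~\ref{brand new1}, and complete the square to obtain $\Phi_Q(X-h,Y-k)=\Phi_Q(-h,-k)$. The difference is organizational. The paper handles an \emph{arbitrary} finite diagonal point $(a,b)$ in one stroke: before transforming to standard form it invokes Proposition~\ref{two three} to switch to a quadrilateral (with the same vertices) for which the given pair of lines meeting at $(a,b)$ becomes the pair $A,A'$, so that after the affine transformation $(a,b)$ lands at the origin and nothing further needs checking. Your version instead fixes the standard-form reduction first (so $(a,b)=A\cdot A'$ is forced to be the origin), then separately verifies that $B\cdot B'$ and $D_1\cdot D_2$ lie on the locus, and finally treats the parallelogram case on its own. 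This works, but the paper's ordering is more economical: by front-loading the quadrilateral switch, it avoids the case analysis over diagonal points and absorbs the parallelogram case automatically (since for a parallelogram the unique finite diagonal point is the intersection of the diagonals, which become non-parallel opposite sides after the switch). One small remark on your $B\cdot B'$ step: the line $\ell$ produced by $(\dagger)$ could in principle coincide with $B$ or $B'$, in which case ``$\ell$ bisects $\{B,B'\}$'' is not literally defined; the conclusion still holds because then $\ell$ is a side of $Q$ whose midpoint you have arranged to be $(a,b)$, but it is worth saying so.
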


\begin{proof}    
Suppose first $Q$ is in standard form and $(a,b) =(0,0)$. By Lemma~\ref{standard phi}, $\Phi_Q(X,Y) = Y^2-\mu X^2$, and by Proposition~\ref{brand new1},  
the bisector locus is the zero set of the polynomial  $Y^2-2kY-\mu(X^2-2hX) $. Completing the square,  the bisector locus is defined by the equation $(Y-k)^2-\mu(X-h)^2=k^2-\mu h^2$.  
 Thus $\Phi_Q(X-h,Y-k)= \Phi_Q(h,k)$, which proves the theorem when $Q$ is in standard  form and $(a,b)$ is the origin.

 Now suppose $Q=ABA'B'$ is not necessarily in standard form.
  Using Proposition~\ref{two three}  we can switch quadrilaterals if necessary and assume that $A$ and $A'$ are not parallel and intersect at the point ${\bf a}=(a,b)$. 
  There is an affine transformation $f$ that carries ${\bf a}$ to the origin and carries $Q$ onto a quadrilateral in standard form. 
  Let $g$ be a  linear transformation such that $f({\bf x}) = g({\bf x}) -g({\bf a})$ for all ${\bf x} \in \k^2$.   
  Let ${\bf c}=(h,k)$ be the centroid of $Q$. Since affine transformations preserve midpoints and hence bisectors, a point ${\bf x}=(x,y) \in \k^2$ is on the bisector locus of $Q$ if and only if   $f({\bf x})$ is a point on the bisector locus of ${f}(Q)$. Let $\mu$ be the coefficient of the quadrilateral $f(Q)$. 
 Since $\Phi_{f(Q)}$ depends only on the slopes of the lines that comprise $f(Q)$, it follows that $ \Phi_{f(Q)} = \Phi_{g(Q)}$.  
   The centroid of ${f}(Q)$ is $f({\bf c})$ and   the theorem has been verified for quadrilaterals in standard form, so we use Proposition~\ref{image orthogonal} to obtain that 
 for each ${\bf x}$ in $\k^2$, 
 \begin{eqnarray*}
f({\bf x}) \in {\mbox{bisector locus }} & \Longleftrightarrow & \Phi_{f(Q)}(f({\bf x})- f({\bf c}))  =  \Phi_{f(Q)}(f({\bf a})-f({\bf c})) \\
 & \Longleftrightarrow &
 \Phi_{g(Q)}(g({\bf x}-{\bf c})) = \Phi_{g(Q)}(g({\bf a}-{\bf c})) \\
 &  \Longleftrightarrow &
  \Phi_Q({\bf x}-{\bf c}) = \Phi_Q({\bf a}-{\bf c}).
  \end{eqnarray*}
  This proves the theorem. 
 \end{proof}

   A conic over $\k$ is {\it degenerate} if the polynomial that defines it is a product of linear polynomials over the algebraic closure of $\k$. 
 Theorem~\ref{deg}, which is illustrated by Figure~3,  gives a criterion for when the bisector locus is degenerate. 

 \begin{theorem} \label{deg} 
 The bisector locus of a quadrilateral $Q$ is degenerate if and only if 
 there is a pair $L,L'$ of parallel sides or diagonals of $Q$. In this case, the bisector locus of $Q$ is the union of the midline of $L$ and $L'$ and  the line through the midpoints of $L$ and $L'$.
 \end{theorem}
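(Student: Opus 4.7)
The plan is to start from Theorem~\ref{nine theorem}, which writes the bisector locus of $Q$ as the conic $\Phi_Q(X-h,Y-k) = C$ with $C = \Phi_Q(a-h,b-k)$ for any finite diagonal point $(a,b)$. Since by Lemma~\ref{nondeg form} the quadratic form $\Phi_Q$ has nonzero discriminant, over the algebraic closure $\overline{\k}$ it factors as a product $L_1L_2$ of two linearly independent linear forms, and a coefficient comparison shows that $L_1L_2 - C$ is reducible over $\overline{\k}$ if and only if $C=0$. Thus the bisector locus is degenerate if and only if $\Phi_Q(a-h,b-k) = 0$, and the remaining task is to recast this vanishing as the condition that $Q$ has a pair of parallel sides or diagonals, and to describe the resulting pair of lines.

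I would first dispatch the parallelogram case: both pairs of opposite sides are parallel, and the diagonals intersect at the centroid, so $(a,b) = (h,k)$ is the only finite diagonal point and $C = \Phi_Q(0,0) = 0$ automatically. In the non-parallelogram case, an affine transformation $f$ puts $Q$ in standard form, and Proposition~\ref{image orthogonal} together with the affine-invariance of parallelism, bisectors, and the bisector locus reduces the biconditional to the standard-form case. Taking $(a,b) = (0,0) = A \cap A'$, Lemma~\ref{standard phi} gives $C = k^2 - \mu h^2$, and expanding via the centroid formulas in~(\ref{centroid eq}) I expect the clean identity
\[
16\mu\bigl(k^2 - \mu h^2\bigr) \;=\; (t_B - t_{B'})\bigl(t_{B'}v_B^2 - t_Bv_{B'}^2\bigr).
\]
The first factor vanishes exactly when $B \parallel B'$; using the vertex formulas~(\ref{vertices}) to compute the slopes of the diagonals of $Q$ as $t_Bv_{B'}/v_B$ and $t_{B'}v_B/v_{B'}$, the second factor vanishes exactly when these diagonals are parallel. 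Since $A \not\parallel A'$ in standard form, this establishes the biconditional.

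For the ``moreover'' statement, the bisector locus factors as the union of two lines through the centroid $(h,k)$ in the two $Q$-isotropic directions (Remark~\ref{self}). A parallel pair $L,L'$ of sides or diagonals is $Q$-orthogonal by Proposition~\ref{alpha def}, so their common direction is $Q$-isotropic; the corresponding factor line passes through $(h,k)$, which lies on the midline of $L,L'$ because the centroid is the midpoint of the midpoints of any pair of opposite sides, and analogously of the midpoints of the diagonals. Hence this factor line is the midline of $L,L'$. The remaining factor line passes through $(h,k)$ in the other isotropic direction, and a direct calculation in standard form (with $L,L' = B,B'$ of common slope $t$, so $\mu = t^2$) shows that the line through the midpoints of $B$ and $B'$ has slope $-t$, matching the other root of $Y^2 - t^2X^2$; the parallel-diagonals case is entirely analogous, and the parallelogram case follows by observing that in a parallelogram the line through the midpoints of one pair of opposite sides is precisely the midline of the other pair. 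The main obstacle is the algebraic bookkeeping in standard form---the clean factorization of $16\mu(k^2 - \mu h^2)$ and the identification of its second factor with the parallel-diagonals condition via the vertex coordinates---after which the description of the two lines falls out from matching slopes to the roots of $\Phi_Q$.
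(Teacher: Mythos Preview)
Your proposal is correct and your algebraic factorization
\[
16\mu\,(k^2-\mu h^2)=(t_B-t_{B'})\bigl(t_{B'}v_B^2-t_Bv_{B'}^2\bigr)
\]
does indeed hold (substitute $4k=v_B+v_{B'}$, $-4h\mu=t_{B'}v_B+t_Bv_{B'}$, $\mu=t_Bt_{B'}$ and expand), and the two factors vanish precisely when $B\parallel B'$ and when the diagonals are parallel, respectively. Your reducibility criterion for $L_1L_2-C$ with $L_1,L_2$ independent is standard, and the isotropic-direction identification of the two factor lines is sound once you note that the line through the midpoints of $L,L'$ also passes through the centroid (being the midpoint of those midpoints).

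The paper's argument is organized differently. For the forward direction in standard form, instead of factoring $k^2-\mu h^2$ algebraically, the paper uses the bisector parametrization of Proposition~\ref{brand new1}(1): from $k^2-\mu h^2=0$ it produces the one-parameter family of bisectors with midpoints $(th,tk)$, all of the same slope, and then invokes Proposition~\ref{sides}(3) to conclude that this slope matches a pair of parallel sides or diagonals. For the converse and the ``moreover'' part, the paper switches (via Proposition~\ref{two three}) to a quadrilateral with $B\parallel B'$ in standard form, substitutes $t_B^2=\mu$ and $k=-t_Bh$, and factors the bisector-locus polynomial directly as $(t_BX+Y)(t_BX-Y+2k)$, reading off the two lines. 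Your route is more self-contained algebra that handles both directions in one stroke and treats the parallel-diagonals case explicitly; the paper's route leans on the structural results of Sections~3--4 and dispatches the diagonal case implicitly via the quadrilateral switch. Both are valid; yours trades a small amount of bookkeeping (the improper case $v_B=0$ or $v_{B'}=0$, where your diagonal-slope formulas degenerate but the second factor is visibly nonzero) for not having to invoke the bisector parametrization or Proposition~\ref{sides}(3).
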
 
 
 \begin{proof}  
   Assume first $Q = ABA'B'$ is in standard form. By Lemma~\ref{standard phi},  $\Phi_Q(X,Y) = Y^2-\mu X^2$. 
Suppose the bisector locus is degenerate.
If $(h,k) = (0,0)$, then by Corollary~\ref{vert}, $Q$ has a pair of parallel sides or diagonals, so we may assume $(h,k) \ne (0,0)$. 
 By Theorem~\ref{nine theorem}, 
 $\Phi_Q(h,k) = k^2-\mu h^2 =0$, so 
by Proposition~\ref{brand new1}(1), 
for every $t \in \k$, 
since 
  $$tk(tk-2k) -\mu th(th-2h)=0, $$ we have that   
    $(th,tk)$ is the midpoint of a bisector given by 
   $tkX+thY-2t^2hk =0$. 
  Each such bisector is distinct (since $(h,k) \ne (0,0)$) and has the same slope, so by Proposition~\ref{sides}(3),  
  %
$Q$ has a pair of parallel sides or diagonals.

Conversely, still assuming $Q$ is in standard form, suppose there is a pair  of   sides or diagonals of $Q$ that are parallel. By Proposition~\ref{two three}  we can switch quadrilaterals if necessary in order to  assume $B$ and $B'$ are parallel and $Q$ is in standard form.  Let $\mu$ be the coefficient of $Q$. 
Since $t_B=t_{B'}$ and $t_Bt_{B'}=\mu$, we have $t_B^2=\mu$. Also, equation~(\ref{centroid eq}) from the proof of Proposition~\ref{brand new1}(1) implies $-4h\mu = t_B(v_B+v_{B'}) = 4t_Bk$, and so $k = -t_Bh$.   
Lemma~\ref{standard phi} and
Theorem~\ref{nine theorem}  imply then that
the bisector locus of $Q$ is the zero set of  $$Y(2k-Y) -\mu X(2h-X)= 
(t_B X+Y)(t_B X-Y+2k).$$ The  line
 $Y=-t_B X $ is the line  through the midpoints of the parallel sides $B$ and $B'$ and 
 the 
 line $Y=t_B X+2k$ is the 
   midline of $B$ and $B'$,
  so the   theorem is proved if $Q$ is in standard form.
 
Now suppose $Q=ABA'B'$ is not necessarily in standard form. By Proposition~\ref{two three}, we can assume $A$ and $A'$ are not parallel and hence  meet at a point $(a,b)$. 
 Let ${\bf c} = (h,k)$ and  ${\bf a}=(a,b)$.  
By Theorem~\ref{nine theorem}, the bisector locus of $Q$ is degenerate if and only if $\Phi_Q({\bf a}-{\bf c}) =0$.  
 Let $g$ be a linear transformation such that the map $f$ from the plane to itself defined by  $f({\bf x}) = g({\bf x}) - g({\bf a})$ for all ${\bf x} \in \k^2$  is an affine transformation that carries $Q$ onto a quadrilateral in standard form and ${\bf a}$ to the origin. Since $f(Q)$ is in standard form, $f(Q)$, and hence $Q$, has a pair of parallel sides or diagonals if and only if $\Phi_{f(Q)}(f({\bf a}) -f({\bf c})) =0$. Since $\Phi_{f(Q)}$ depends only on the slopes of the lines that comprise $f(Q)$, the quadrilateral $Q$ has a pair of parallel sides or diagonals 
 if and only if $\Phi_{g(Q)}(g({\bf a}-{\bf c}))=0$; if and only if $\Phi_Q({\bf a}-{\bf c}) =0$, where this last equivalence follows from Proposition~\ref{image orthogonal}. 
     \end{proof}

 The {\it nine-point conic} of a quadrangle $\Q$ is the unique conic passing through the six {midpoints} of the quadrangle, i.e., the midpoints of each pair of vertices,
 and the three {diagonal points} of $\Q$, the points at which   opposite sides meet, where possibly these diagonal points are at infinity. 
 For a  proof of the existence of the nine-point conic over an arbitrary field of characteristic $\ne 2$,  see \cite[16.5.5.1, p.~198]{Berger}.  
  In our case, the existence   follows from the fact that the bisector locus is a conic that passes through these same nine points.

 \begin{corollary} \label{nine} The bisector locus of a quadrangle  $\Q$ is the  conic through the diagonal points and midpoints of $\Q$ and hence is the nine-point conic for $\Q$. 
\end{corollary}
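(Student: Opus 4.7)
The plan is to exhibit a quadrilateral $Q$ belonging to $\Q$ and show that the bisector locus of $Q$, which equals the bisector locus of $\Q$ by Proposition~\ref{two three} (and the remark before Theorem~\ref{nine theorem}), passes through each of the nine distinguished points of $\Q$. Since by Theorem~\ref{nine theorem} this locus is a conic, and since the nine canonical points determine the nine-point conic uniquely, the two conics must coincide.

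First I would pick a quadrilateral $Q = ABA'B'$ belonging to $\Q$ such that $A$ and $A'$ are not parallel (this is always possible: not all three pairs of opposite sides of $\Q$ can be simultaneously parallel, since $\Q$ has only four vertices), so that $A \cap A'$ is a finite diagonal point of $Q$. Invoking Theorem~\ref{nine theorem} with this finite diagonal point, the bisector locus is the conic $\Phi_Q(X-h,Y-k)=\Phi_Q(a-h,b-k)$ centered at the centroid $(h,k)$. I would then observe that the three diagonal points of $\Q$ are exactly the three diagonal points of $Q$: the two intersections of opposite sides of $Q$ together with the intersection of its diagonals account for all three pairs of opposite sides of $\Q$. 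So Theorem~\ref{nine theorem}, applied with each finite diagonal point in turn, shows that every finite diagonal point of $\Q$ lies on the locus.

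For the six midpoints of pairs of vertices, the key observation is that each of the six sides of $\Q$ is either a side or a diagonal of $Q$ (indeed, the four sides of $Q$ account for four of them, and the remaining two are the diagonals of $Q$). By Proposition~\ref{sides}(2), each such line $\ell$ is a bisector of $Q$. If $\ell$ joins vertices $v_1$ and $v_2$ of $\Q$, then $\ell$ meets the pair of opposite sides (or diagonals) of $Q$ complementary to it precisely at $v_1$ and $v_2$; hence $\mid_Q(\ell)$ is the midpoint of $v_1$ and $v_2$. Running through the six sides of $\Q$, all six pairwise midpoints of vertices of $\Q$ appear as midpoints of bisectors of $Q$, and therefore lie on the bisector locus.

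The main technical wrinkle is handling diagonal points or midpoints at infinity, which arise precisely when $\Q$ has a pair of parallel sides. In that case Theorem~\ref{deg} already identifies the bisector locus as the union of the midline and the line through the midpoints of the parallel pair; viewed projectively this degenerate conic still passes through the relevant point at infinity, so the nine-point matching extends to the projective closure. Putting the cases together, the bisector locus is a conic passing through the nine canonical points of $\Q$, and by the uniqueness of the nine-point conic \cite[16.5.5.1]{Berger} the two coincide.
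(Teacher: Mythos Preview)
Your proof is correct and follows the same overall line as the paper: show the bisector locus is a conic (Theorem~\ref{nine theorem}), then check it passes through the six side-midpoints and the three diagonal points, handling infinite diagonal points via Theorem~\ref{deg}.

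The one genuine difference is in how you treat the \emph{finite} diagonal points. The paper argues constructively: for a diagonal point $A\cdot A'$, it produces an explicit bisector of $\Q$ whose midpoint is $A\cdot A'$ (splitting into cases according to whether the remaining sides form a parallelogram). You instead read it straight off the equation in Theorem~\ref{nine theorem}: since the bisector locus is $\Phi_Q(X-h,Y-k)=\Phi_Q(a-h,b-k)$ for \emph{any} finite diagonal point $(a,b)$ of $Q$, the right-hand side is the same constant regardless of which diagonal point is chosen, so each finite diagonal point trivially satisfies the equation. This is cleaner and avoids the case analysis. A small cosmetic point: midpoints of pairs of vertices are never at infinity (both vertices are affine), so the ``midpoints at infinity'' clause in your last paragraph is unnecessary.
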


 \begin{proof} 
The six sides of $\Q$ are bisectors with respect to their midpoints, and so the midpoints of these  sides lie on the bisector locus. 
We claim the diagonal points of $\Q$ are also midpoints of bisectors of~$\Q$.  If a side $A$ of $\Q$ is parallel to its opposite side $A'$, then the diagonal point for $A$ and $A'$ is the point at infinity for these lines. In this case,  Theorem~\ref{deg} implies the midline of  $A$ and $A'$ is part of the bisector locus. Since the point at infinity for the midline is the diagonal point of $A$ and $A'$, this point is a point at infinity for the bisector locus of~$\Q$.  

Now suppose $A$ is not parallel to $A'$. If the other two pairs of opposite sides of $\Q$ form a parallelogram, then $A$ and $A'$ are the diagonals of this parallelogram and these lines meet at the center of the parallelogram, in which case the diagonal point $A \cdot A'$ is the side midpoint of both $A$ and $A'$ and hence the midpoint of a bisector. Otherwise, if there is another pair  $B,B'$ of opposite sides that are not parallel,  
then $A \cdot A'$ is the midpoint of a point  on $B$ and a point  on $B'$, and hence  
the line through these two points is  a bisector of $ABA'B'$ having $A \cdot A'$ as its midpoint.  By Proposition~\ref{two three}, this line bisects $\Q$ and so $A \cdot A'$ is on the bisector locus of $\Q$.   
%
\end{proof}

 \section{The bisector field  of a quadrilateral}

By a {conic} in the affine plane over $\k$, we mean a  quadratic polynomial  in $\k[X,Y]$. 
A {hyperbola}  is a centered  conic that has two distinct points at infinity. Its {asymptotes}   are the two lines through the center of the hyperbola that meet the hyperbola at infinity. 
A {parabola} is a conic with one point at infinity and an {ellipse} a conic with no points at infinity. A pair of parallel lines or a double line is  a degenerate parabola.

A pair of lines in the affine plane is a
 if there is  $\lambda \in \k$ such that  the union of these two  lines is the set of zeroes of $f +\lambda$.  
Neither an ellipse nor a parabola  has a degeneration, and  
 a hyperbola has a unique degeneration, namely its asymptotes. A 
 pair of parallel lines $\ell,\ell'$  has as its degenerations the pairs of lines parallel to $\ell$ and $\ell'$ that share the same midline as the pair  $\ell,\ell'$.  See \cite[Proposition 2.2]{OW4} for more details.


 
 To each quadrilateral $Q=ABA'B'$, we associate  a pencil   of conics. Let $f$ and $g$ be degenerate 
conics whose zero sets are $A \cup A'$ and $B \cup B'$, respectively.  
The {\it pencil of} 
$Q$ is  
$$
 {\rm Pen}(Q) \: = \: \{\alpha f_1 + \beta f_2:\alpha,\beta   \in \k {\mbox { \rm with }}  \alpha { \mbox { \rm and }} \beta   { \mbox { \rm not both zero}}\} 
$$
   If $Q$ is a proper quadrilateral, then the pencil of $Q$  is simply the set of conics passing through all four vertices of $Q$. If instead $Q$ is an  improper quadrilateral and $p$ is the double vertex of $Q$, the vertex through which three sides   pass, say $A,A',B$, then the pencil of $Q$ is the set of conics that pass through all three vertices and   are tangent to $B$ at $p$. 
The {\it asymptotic pencil} of $Q$ is the set of reducible conics of the form $f+\lambda$, where $\lambda \in \k$ and $f \in $ Pen$(Q)$; i.e., the asymptotic pencil of $Q$ is the set of conics whose zero sets are the degenerations of the conics in Pen$(Q)$.    It consists of the asymptotes of the hyperbolas in Pen($Q$) and the pairs of parallel lines that share their midline with that of a pair of parallel lines in Pen($Q$), if any such parallel lines exist. 
See Figure~4 for an illustration. 
The asymptotic pencil is  the subject of \cite{OW4}.

   \begin{figure}[h] 
     \label{9noobconics}
 \begin{center}
\includegraphics[width=.95\textwidth,scale=.09]{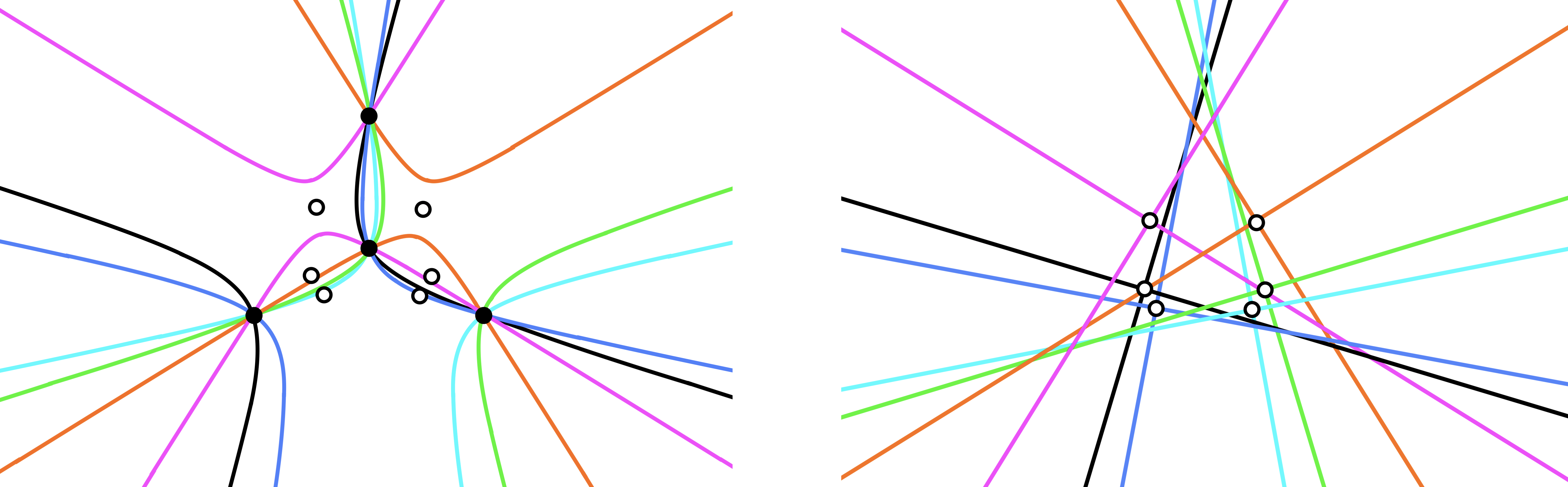} 
 \end{center}
 \caption{On the left are several conics belonging to the pencil through the four 
  black points. 
  The centers of these conics are marked with white points. On the right are the asymptotes of these same conics.
  The asymptotes of the  black hyperbola at left are the black lines at right.   By Theorem~\ref{degenerations},  these pairs belong to the bisector field of the quadrilateral defined by the pencil. The midpoints of the bisectors are the white points.
     }
\end{figure}

The next lemma, which is crucial in the proofs of several of the results that follow, gives a first connection between bisectors and asymptotic pencils. 

\begin{lemma}  \cite[Lemma~5.2]{OW4} \label{OW4 lemma}
A line $\ell$ is a bisector of a quadrilateral  $Q$ if and only if it is part of a degeneration of a conic in the pencil of $Q$.  
\end{lemma}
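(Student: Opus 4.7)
The plan is to encode both sides of the equivalence through Desargues' involution theorem (Lemma~\ref{des}). Assume first that $Q$ is a proper quadrilateral and $\ell$ does not pass through a vertex of $Q$; let $\mathcal{Q}$ be the quadrangle to which $Q$ belongs. Desargues' theorem provides an involution $\Lambda_\ell$ on $\ell$ (determined by either of the conjugate pairs $\{A\cap\ell, A'\cap\ell\}$ or $\{B\cap\ell, B'\cap\ell\}$) such that every conic in ${\rm Pen}(Q)$ meets $\ell$ in a $\Lambda_\ell$-conjugate pair. By Lemma~\ref{sides quad}, $\ell$ bisects $Q$ if and only if $\Lambda_\ell$ is a reflection, equivalently if and only if the point at infinity $p_\infty$ of $\ell$ is fixed by $\Lambda_\ell$.

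For the reverse implication, suppose $\ell$ belongs to a degeneration $\{\ell,\ell'\}$ of a conic $C \in {\rm Pen}(Q)$, so the zero set of $f+\lambda$ is $\ell \cup \ell'$ where $C$ is the zero set of $f$. Since $f$ and $f+\lambda$ share the same homogeneous quadratic part, $C$ and the pair $\ell \cup \ell'$ meet the line at infinity in the same two points, and the projective tangent direction of $C$ at $p_\infty$ is along $\ell$. Hence the two intersection points of $C$ with $\ell$, counted projectively, coincide at $p_\infty$, which means the $\Lambda_\ell$-conjugate pair associated to $C$ is $(p_\infty,p_\infty)$. Thus $p_\infty$ is a fixed point of $\Lambda_\ell$, so $\Lambda_\ell$ is a reflection and $\ell$ is a bisector.

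For the forward implication, suppose $\ell$ is a bisector, so $\Lambda_\ell$ fixes $p_\infty$. Since $p_\infty$ is not a base point of ${\rm Pen}(Q)$ (the base points being the affine vertices of $Q$), there is a conic $C \in {\rm Pen}(Q)$ passing through $p_\infty$. The intersection $C \cap \ell$ is a $\Lambda_\ell$-conjugate pair, and since one element is $p_\infty$ and $\Lambda_\ell(p_\infty) = p_\infty$, the pair must be $(p_\infty,p_\infty)$, meaning $\ell$ is tangent to $C$ at infinity. If $C$ is a hyperbola this forces $\ell$ to be the asymptote of $C$ at $p_\infty$, so $\ell$ is one of the pair of asymptotes that constitute the degeneration of $C$. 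If $C$ is already a pair of parallel lines, then $\ell$ is parallel to them, and direct inspection of $f+\lambda$ shows $\ell$ belongs to a parallel pair that is itself a degeneration of $C$.

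The main obstacle is handling the boundary cases: $\ell$ passing through a vertex of $Q$, the quadrilateral $Q$ being improper, or $\ell$ being parallel to several sides so that the conic $C$ selected above is not uniquely determined. For bisectors through a vertex, Proposition~\ref{sides}(2) identifies them as sides or diagonals of $Q$, each of which lies in a visible degeneration of ${\rm Pen}(Q)$: the pairs $A \cup A'$ and $B \cup B'$, or, in the proper case, the pair of diagonals. The improper case replaces Desargues' involution argument with the tangency condition that defines the pencil at the doubled vertex. I expect the most delicate technical step to be the careful bookkeeping of these degenerate configurations, since the naive Desargues setup excludes them.
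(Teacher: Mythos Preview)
The paper does not prove this lemma; it is quoted as \cite[Lemma~5.2]{OW4} and used as an imported black box in Section~6. There is no proof in the present paper to compare your proposal against.

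That said, your approach via Desargues' involution is the natural one and is essentially correct in the generic case. One omission in the forward direction: after choosing the conic $C \in {\rm Pen}(Q)$ through $p_\infty$, you split into ``$C$ is a hyperbola'' and ``$C$ is a pair of parallel lines,'' but you do not address the possibility that $C$ is a nondegenerate parabola, which has no degeneration. This is easily patched: if $C$ were a nondegenerate parabola with its unique point at infinity at $p_\infty$, then its tangent there is the line at infinity, so any affine line $\ell$ through $p_\infty$ meets $C$ transversally at $p_\infty$ and at one affine point $q$; Desargues would then give $\Lambda_\ell(p_\infty)=q\neq p_\infty$, contradicting the bisector hypothesis. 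With that remark your case split is complete. The vertex and improper-quadrilateral cases you flag are indeed where the residual bookkeeping lies, and Proposition~\ref{sides}(2) handles the former as you indicate.
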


Thus if $\ell$ is a bisector of a quadrilateral $Q$,  there is another bisector $\ell'$ of $Q$ such that $\ell,\ell'$ is  a degeneration of a conic in Pen$(Q)$.  This suggests a natural pairing on the bisectors of $Q$,
one that proves essential  in the context of bisector fields, which are considered at the end of this section. 
There are, however, two other natural pairings on the bisectors of $Q$, that of $Q$-orthogonality and another involving antipodal points. 
Because of some pathologies involving parallel lines, $Q$-orthogonality itself is not quite sufficient to handle all cases, so we combine this relationship with an antipodal one across the centroid of $Q$. 

  \begin{definition} \label{Q pair def}
   A pair   of bisectors of the quadrilateral $Q$ is 
    {\it $Q$-antipodal} if
the midpoint of the midpoints of the bisectors   is the centroid of $Q$. 
The pair is a
 {\it $Q$-pair} 
if  it is  $Q$-antipodal and $Q$-orthogonal. 
%
 \end{definition}

We will show in Corollary~\ref{orthogonal pair} that in most cases, a pair of bisectors is    $Q$-antipodal if and only if it is $Q$-orthogonal. This is not true in all cases, however, as the 
following example shows.
 
 \begin{example} \label{parallel remark} The bisector locus of a parallelogram $Q$ is a pair of lines meeting at the center of $Q$ and parallel to the sides of $Q$, as in Figure~5. It is straightforward to verify that the $Q$-pairs of bisectors of $Q$ are
(a) the pairs of lines parallel to a pair of sides of $Q$ and having the same midline as these sides,  
 (b) the midlines of the parallel pairs in (a), where each midline is paired with itself, and   
 (c) the diagonals of the parallelograms whose pairs of opposite sides are the pairs in (a). 
 Where the lines in (a) cross a midline from (b) are the midpoints of these bisectors, while  
 the bisectors in (b) and (c) all have the centroid of $Q$ as their midpoint. 
Any pair of lines parallel to a midline in (b) is a $Q$-orthogonal pair of bisectors of $Q$ but need not be a $Q$-antipodal pair. Similarly, any pair of lines through the center of $Q$ is a $Q$-antipodal pair of bisectors that need not be $Q$-orthogonal. 
 \end{example}

      \begin{figure}[h] 
     \label{9noobconics}
 \begin{center}
\includegraphics[width=.65\textwidth,scale=.09]{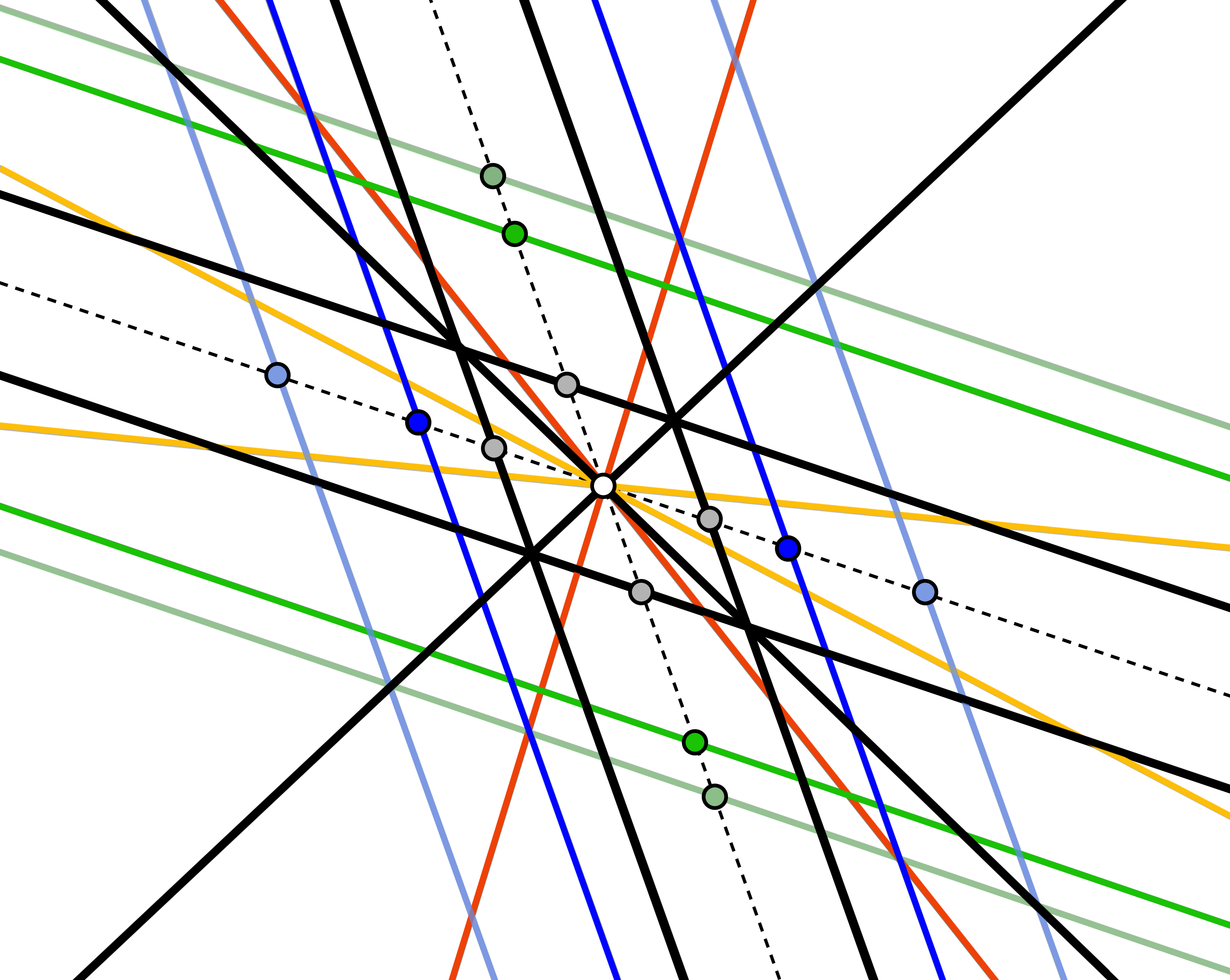} 
 \end{center}
 \caption{Pairs from a bisector field of  a parallelogram. All the lines through the center point are bisectors having the center as their  midpoint. The lines   through the center are paired according to the shade of gray with which they are colored. The parallel lines are paired via antipodal midpoints.    }
\end{figure}

The next theorem gives a quadrilateral-specific way to detect the degenerations of the conics in the pencil of a quadrilateral  
that is independent of reference to the pencil itself.

\begin{theorem} \label{degenerations}  
The $Q$-pairs of bisectors of $Q$ are precisely the 
degenerations of the conics in the pencil of $Q$. 
%
\end{theorem}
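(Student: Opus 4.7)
The approach is to reduce to standard form and match the family of $Q$-pairs of bisectors against the family of degenerations of $\mathrm{Pen}(Q)$ by an explicit coefficient comparison. Affine transformations preserve midpoints (hence $Q$-antipodal pairs), preserve $Q$-orthogonality by Proposition~\ref{image orthogonal}, and carry $\mathrm{Pen}(Q)$ to $\mathrm{Pen}(f(Q))$ while taking degenerations to degenerations. So, provided $Q$ is not a parallelogram, I may assume $Q=ABA'B'$ is in standard form with centroid $(h,k)$ and coefficient $\mu$; the parallelogram case I treat separately by cross-checking the explicit list in Example~\ref{parallel remark} against a direct computation of the asymptotic pencil.

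\textbf{Forward direction.} Let $\ell_1,\ell_2$ be the two lines of a degeneration of a conic $C\in\mathrm{Pen}(Q)$. Each $\ell_i$ is a bisector of $Q$ by Lemma~\ref{OW4 lemma}. The two points at infinity of $\ell_1,\ell_2$ coincide with the two points at infinity of $C$, and the involution argument from the proof of Proposition~\ref{alpha def} shows these points are conjugate under $\Lambda_Q$, whence $\ell_1,\ell_2$ is $Q$-orthogonal. For the antipodal condition, parametrize (generically) $\ell_i\colon Y=m_iX+c_i$ and equate
\[
\ell_1\ell_2 \;=\; \alpha XY+\beta(t_BX-Y+v_B)(t_{B'}X-Y+v_{B'})+\gamma
\]
coefficient by coefficient. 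Using (\ref{centroid eq}), this yields $m_1m_2=\mu$, $c_1+c_2=4k$ and $m_1c_2+m_2c_1=-4\mu h$. Since by Proposition~\ref{brand new1} the midpoint of $\ell_i$ is $(p_i,q_i)=(-c_i/(2m_i),\,c_i/2)$, short computations give $q_1+q_2=(c_1+c_2)/2=2k$ and $p_1+p_2=-(c_1m_2+c_2m_1)/(2m_1m_2)=2h$, so the pair is $Q$-antipodal.

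\textbf{Reverse direction.} Conversely, given a $Q$-pair of bisectors with midpoints $(p_1,q_1)$ and $(p_2,q_2)$, Proposition~\ref{brand new1} supplies equations $q_iX+p_iY-2p_iq_i=0$. The antipodal condition gives $p_1+p_2=2h$ and $q_1+q_2=2k$, and substituting these into the bisector-locus equation from Proposition~\ref{brand new1}(1) for either midpoint produces $q_1q_2=\mu p_1p_2$ (which by Lemma~\ref{standard phi} is exactly the $Q$-orthogonality of $\ell_1,\ell_2$). Plugging these identities back into the three coefficient relations from the forward direction determines scalars $\alpha,\beta,\gamma\in\k$ realizing $\ell_1\ell_2$ as $\alpha XY+\beta(t_BX-Y+v_B)(t_{B'}X-Y+v_{B'})+\gamma$, so the pair is a degeneration of $\alpha f_1+\beta f_2\in\mathrm{Pen}(Q)$.

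\textbf{Main obstacle.} The principal technical difficulty lies in the non-generic configurations: bisectors parallel to $X=0$ or $Y=0$ (where the slope parametrization $Y=m_iX+c_i$ breaks down), degenerations consisting of a pair of parallel lines (which by Theorem~\ref{deg} are tied to having parallel opposite sides in $Q$), improper quadrilaterals (for which the pencil is defined by tangency and the Desargues step must be replaced by a direct verification from Definition~\ref{alpha def notation}), and parallelograms (to which standard form is inapplicable). In each such case, Proposition~\ref{two three} permits switching to another quadrilateral sharing vertices with $Q$ so that the slope/standard-form machinery applies, and Example~\ref{parallel remark} provides the explicit catalog against which the parallelogram case is verified.
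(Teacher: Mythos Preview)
Your proposal is correct and follows the same overall strategy as the paper: reduce to standard form, write the pencil explicitly, and compare coefficients. The forward direction is essentially identical—the paper derives $m_1m_2=\mu$ directly from the $X^2$-coefficient rather than invoking the Desargues involution, which is marginally cleaner and works uniformly for improper $Q$, but your route via $\Lambda_Q$ is fine too.

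The genuine difference is in the converse. You propose solving for $\alpha,\beta,\gamma$ explicitly from the three coefficient relations, which works but obliges you to carry out the edge-case analysis you flag as the ``main obstacle'' (bisectors through the origin, lines parallel to an axis, the parallelogram case). The paper sidesteps almost all of this with a short uniqueness argument: given a $Q$-pair $\ell_1,\ell_2$, Lemma~\ref{OW4 lemma} already furnishes some $\ell_2'$ with $\ell_1,\ell_2'$ a degeneration; by the forward direction $\ell_1,\ell_2'$ is itself a $Q$-pair, so $\ell_2$ and $\ell_2'$ are both $Q$-orthogonal to $\ell_1$ (hence parallel) and both $Q$-antipodal to $\ell_1$ (hence share the same midpoint), forcing $\ell_2=\ell_2'$. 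This gives a two-line converse and confines the case analysis to the forward direction, where only the ``parallel to an axis'' case needs separate treatment. Your direct construction buys nothing extra here, so the paper's route is the more economical one.
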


 \begin{proof}  
By Proposition~\ref{two three}, if $Q$ is a parallelogram we may switch to a quadrilateral that has the same vertices as $Q$ but for which $Q$ is not a parallelogram. Thus, after an affine transformation, 
 we may assume  $Q=ABA'B'$   is in standard form.   
Let  $v = v_Bv_{B'}$,  let $(h,k)$ be the centroid of $Q$ and let $\mu = t_Bt_{B'}$ be the coefficient of $Q$. 
 Using equations (\ref{vertices}) and (\ref{centroid eq}) from the proof of Proposition~\ref{brand new1}(1),  
   along with the fact that $t_Bt_{B'}=\mu$,  it
   follows that 
 the conics in the pencil of $Q$ are  given by the equations of the form 
 \begin{eqnarray} \label{first conic}  \mu a X^2 + \mu bXY + a Y^2 -4a\mu h X - 4akY +a v=0.
 \end{eqnarray} 
 where $a$ and $b$ range over the elements in $\k$ with   $a$ and $b$  not both $0$. 
 
We first prove that if $\ell_1,\ell_2$ is a degeneration of one of the conics of the form (\ref{first conic}), and at least one of $\ell_1,\ell_2$ is parallel to the $X$ or $Y$-axis, then $\ell_1,\ell_2$ is a $Q$-pair of bisectors of $Q$. 
 Let $\ell_1,\ell_2$ be a pair of lines that is a degeneration of a conic in  (\ref{first conic}). 
By Lemma~\ref{OW4 lemma}, $\ell_1$ and $\ell_2$ are bisectors of $Q$.   
 Suppose first that one of these lines is parallel to the $Y$-axis. Then there are $\lambda,a,b,t,u,v,w \in \k$ such that at least one of $a,b$ is nonzero, at least one of $t,u$ is nonzero and 
 $$\mu a X^2 + \mu bXY + a Y^2 -4a\mu h X - 4akY +a v +\lambda = (X+w)(tX-uY+v).$$  This implies $a =0$ and hence $t =v=w =0$, so that $\ell_1$ and $\ell_2$ are the $X$ and $Y$-axes. Thus $\ell_1,\ell_2$ is a $Q$-pair of bisectors since the axes are a pair of opposite sides of $Q$. A similar argument 
 shows that if one of $\ell_1$ and $\ell_2$ is parallel to the $X$-axis, then $a =0$ in equation (\ref{first conic}), and $\ell_1,\ell_2$ are the axes and hence a $Q$-pair of bisectors of $Q$. 
 
 
 
We can therefore reduce to the case that 
 $a=1$ in equation (\ref{first conic}). The next thing to prove is that  if $\ell_1,\ell_2$ is a degeneration of a conic of the form $$f_b(X,Y) := \mu  X^2 + \mu bXY +  Y^2 -4\mu h X - 4kY + v, {\mbox{ where }} b \in \k,$$ such that neither $\ell_1$ nor $\ell_2$ is parallel to an axis, 
 then $\ell_1,\ell_2$ is a $Q$-pair of bisectors of $Q$. 
 Let $\ell_1,\ell_2$ be a
degeneration of $f_b$ for some $b \in \k$, where neither $\ell_1$ nor $\ell_2$ is parallel to an axis. Then $\ell_1$ and $\ell_2$ are bisectors of $Q$ by Lemma~\ref{OW4 lemma}, and 
there are $\lambda,t_1,v_1,t_2,v_2 \in \k$ such that 
  $$f_b(X,Y) +\lambda = (t_1X-Y+v_1)(t_2X-Y+v_2).$$ 
 Equating coefficients yields $$\mu = t_1t_2 \quad -4\mu h = t_1v_2+t_2v_1 \quad 4k = v_1+v_2.$$  
 Since $\mu = t_1t_2$, the pair  $\ell_1,\ell_2$ is  $Q$-orthogonal. To see that $\ell_1,\ell_2$ is a $Q$-antipodal pair, consider the quadrilateral $Q' = A\ell_1A'\ell_2$. Its vertices are $(0,v_1),(0,v_2), (-v_1/t_1,0),(-v_2/t_2,0)$. The centroid of $Q'$, which is the mean of these vertices, is $$\left(\frac{-v_1t_2-v_2t_1}{4t_1t_2}, \frac{v_1+v_2}{4}\right)=
 \left(\frac{-v_1t_2-v_2t_1}{4\mu}, \frac{v_1+v_2}{4}\right) =
 (h,k).$$ The midpoints of $\ell_1$ and $\ell_2$ as bisectors of $Q$ are the same as the midpoints of $\ell_1$ and $\ell_2$ as sides of $Q'$, so since the midpoint of the midpoints of a pair of opposite sides of a quadrilateral is the centroid of that quadrilateral, we conclude that $\ell_1,\ell_2$ is a $Q$-antipodal pair.  In all cases, we have proved that if $\ell_1,\ell_2$ is a degeneration of a conic in the pencil of $Q$, then $\ell_1,\ell_2$ is a $Q$-pair of bisectors of $Q$.  
 
Conversely, suppose  $\ell_1,\ell_2$ is a $Q$-pair of bisectors of $Q$. We show $\ell_1,\ell_2$ is a degeneration of a conic  in the pencil of $Q$.  Lemma~\ref{OW4 lemma} implies that since $\ell_1$ is a bisector of $Q$, there is a line $\ell_2'$ such that $\ell_1,\ell_2'$ is a degeneration of a conic in the pencil of $Q$. 
By what we have established, $\ell_1,\ell_2'$ 
is therefore a $Q$-pair of bisectors. Thus $\ell_1$ is $Q$-orthogonal to $\ell_2$ and $\ell_2'$, and so $\ell_2$ and $\ell_2'$ are parallel. Also, since $\ell_1,\ell_2$ and $\ell_1,\ell_2'$ are $Q$-antipodal pairs, $\ell_2$ and $\ell_2'$ share the same midpoint, so $\ell_2 = \ell_2'$.  
%
Therefore, 
 $\ell_1,\ell_2$ is a degeneration of a conic in the pencil of~$Q$. 
 \end{proof}

The theorem implies that the  $Q$-pairs of bisectors of $Q$ that are not parallel to each other are precisely the 
asymptotes of the hyperbolas in the pencil of $Q$. 
The point of the next corollary is that for almost all choices of quadrilaterals and pairs of bisectors, the properties of being $Q$-antipodal and $Q$-orthogonal are redundant. 

 

%



 \begin{corollary} \label{orthogonal pair} Let $Q$ be a quadrilateral. 
 \begin{enumerate}
 \item[$(1)$] 
  Every $Q$-orthogonal pair of  bisectors not both parallel to two sides or diagonals of $Q$ is  $Q$-antipodal and hence is a $Q$-pair. 
   
   \item[$(2)$] 
    Every $Q$-antipodal pair of bisectors that do not share a midpoint is   $Q$-orthogonal and hence is a $Q$-pair.
    \end{enumerate}  

 \end{corollary}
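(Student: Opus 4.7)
The plan is to use Lemma 6.2 and Theorem 6.5 to produce, for each bisector $\ell_1$ that appears in a pair under consideration, a canonical companion $\ell_2'$ making $\ell_1,\ell_2'$ a $Q$-pair, and then to argue in both parts that $\ell_2$ and $\ell_2'$ must coincide. Concretely, by Lemma~\ref{OW4 lemma}, every bisector $\ell_1$ of $Q$ is part of a degeneration $\ell_1,\ell_2'$ of some conic in the pencil of $Q$, and Theorem~\ref{degenerations} tells us this $\ell_1,\ell_2'$ is a $Q$-pair.

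For~(1), suppose $\ell_1,\ell_2$ is a $Q$-orthogonal pair of bisectors. Both $\ell_2$ and $\ell_2'$ are $Q$-orthogonal to $\ell_1$. Because $\langle -,-\rangle_Q$ is nondegenerate (Lemma~\ref{nondeg form}) and depends only on direction, the $Q$-orthogonal complement of the direction of $\ell_1$ is one-dimensional, so $\ell_2 \parallel \ell_2'$. If $\ell_2 = \ell_2'$ we are done. Otherwise, $\ell_2$ and $\ell_2'$ are two distinct parallel bisectors, so Proposition~\ref{sides}(3) produces a pair $L,L'$ of sides or diagonals of $Q$ parallel to both. Since $\ell_1$ is $Q$-orthogonal to $\ell_2\parallel L$, and $L'$ is also $Q$-orthogonal to $L$ by Proposition~\ref{alpha def}, the one-dimensional-complement argument (with attention to the possibility that the direction of $L$ is a null vector for $\Phi_Q$, in which case $L\parallel L'$ and $\ell_1\parallel L$ follow from Remark~\ref{self}) forces $\ell_1\parallel L'$ in every case. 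Hence both $\ell_1$ and $\ell_2$ are parallel to the pair $L,L'$ of sides or diagonals of $Q$, contradicting the hypothesis. Thus $\ell_2=\ell_2'$ and $\ell_1,\ell_2$ is a $Q$-pair.

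For~(2), suppose $\ell_1,\ell_2$ is a $Q$-antipodal pair of bisectors that do not share a midpoint. Both $\ell_1,\ell_2$ and $\ell_1,\ell_2'$ are $Q$-antipodal, so the midpoints of $\ell_2$ and $\ell_2'$ both equal $2\mathbf{c}-\mathrm{mid}_Q(\ell_1)$, where $\mathbf{c}$ is the centroid of $Q$. In particular $\ell_2$ and $\ell_2'$ share a midpoint. If $\ell_2\ne \ell_2'$, then Proposition~\ref{unique} forces the vertices of $Q$ to form a parallelogram with shared midpoint equal to $\mathbf{c}$; but then $\mathrm{mid}_Q(\ell_2)=\mathbf{c}$ combined with the $Q$-antipodal condition for $\ell_1,\ell_2$ forces $\mathrm{mid}_Q(\ell_1)=\mathbf{c}$ as well, contradicting the hypothesis that $\ell_1$ and $\ell_2$ do not share a midpoint. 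Thus $\ell_2=\ell_2'$ and $\ell_1,\ell_2$ is a $Q$-pair.

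The one subtle step is the case analysis in~(1) when $\ell_1$'s direction is null for $\Phi_Q$, since then $\ell_1$ is $Q$-orthogonal to itself and the uniqueness of the orthogonal direction degenerates; handling this requires combining Remark~\ref{self} with the fact that a null side or diagonal $L$ of $Q$ forces $L\parallel L'$, from which one still concludes that $\ell_1$ lies in the excluded parallel configuration. Aside from that, both arguments are essentially the same maneuver: produce a $Q$-pair partner $\ell_2'$ from the pencil-of-conics side of the theory, then use the extra hypothesis to rule out the one way $\ell_2$ could fail to equal $\ell_2'$ (Proposition~\ref{sides}(3) in~(1), Proposition~\ref{unique} in~(2)).
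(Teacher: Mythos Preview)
Your proof is correct and follows essentially the same strategy as the paper's: use Lemma~\ref{OW4 lemma} and Theorem~\ref{degenerations} to produce a canonical $Q$-pair partner $\ell_2'$ for $\ell_1$, then argue $\ell_2=\ell_2'$ via Proposition~\ref{sides}(3) in~(1) and Proposition~\ref{unique} in~(2).

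There are two minor differences worth noting. In~(1), the paper simply observes that $\ell_2\ne\ell_2'$ forces $\ell_2$ to be parallel to a pair of sides or diagonals and stops there; you go one step further and show that $\ell_1$ is then \emph{also} parallel to that same pair, which is what the literal ``not both'' hypothesis requires. Your extra step (including the null-direction case via Remark~\ref{self}) is sound, since the $Q$-orthogonal complement of any one-dimensional subspace is one-dimensional by nondegeneracy, so $\ell_1\parallel L'\parallel L$ follows regardless. In~(2), the paper produces partners on both sides ($\ell_2'$ for $\ell_1$ and $\ell_1'$ for $\ell_2$) and uses Proposition~\ref{unique} to conclude one of the two equalities $\ell_1=\ell_1'$ or $\ell_2=\ell_2'$ holds; you instead use only one partner and invoke the full conclusion of Proposition~\ref{unique} (that the shared midpoint is the centroid) to derive the contradiction. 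Both routes are clean, and yours is arguably slightly more direct.
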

 
 \begin{proof} 
 %
 (1) Suppose $\ell_1,\ell_2$ is a $Q$-orthogonal pair of bisectors of $Q$ that are not parallel to a pair of sides or diagonals of $Q$. By Lemma~\ref{OW4 lemma}, there is a bisector $\ell_2'$   such that $\ell_1,\ell_2'$ is a degeneration  of a conic in the pencil of $Q$.  
 By Theorem~\ref{degenerations}, $\ell_1,\ell_2'$ is a $Q$-pair.  
 Since $\ell_1,\ell_2$ is a $Q$-orthogonal pair of bisectors, $\ell_2$ and $\ell_2'$ are parallel. If   $\ell_2 \ne \ell_2'$, then by Proposition~\ref{sides}(3), $\ell_2$ is  parallel to a pair of sides or diagonals of $Q$, contrary to assumption. Thus   $\ell_2 = \ell_2'$, and so $\ell_1,\ell_2$ is a degeneration of a conic in the pencil of $Q$. By Theorem~\ref{degenerations}, $\ell_1,\ell_2$ is a $Q$-pair.
 
 (2) Suppose $\ell_1,\ell_2$ is a $Q$-antipodal  pair of bisectors that do not share a midpoint. As in the proof of (1),  there are bisectors $\ell_1'$ and $\ell_2'$ such that $\ell_1,\ell_2'$ and $\ell_1',\ell_2$ are  $Q$-pairs of bisectors. Since these pairs are $Q$-antipodal, $\ell_2'$ and $\ell_2$ share the same midpoint, as do $\ell_1$ and $\ell_1'$.  By assumption, $\ell_1$ and $\ell_2$ do not share a midpoint, so by Proposition~\ref{unique}, either $\ell_1 = \ell_1'$ or $\ell_2 = \ell_2'$. As in the proof of (1), this implies $\ell_1,\ell_2$ is a $Q$-pair. 
 \end{proof} 
 
 Whether two bisectors are $Q$-orthogonal is {\it a priori} not easy to determine  geometrically since orthogonality depends on the inner product defined in Section~$2$. The next corollary, however,  shows that for  a generically chosen quadrilateral, there is a simple geometric way to distinguish whether two bisectors are $Q$-orthogonal. 

\begin{corollary}     If the quadrilateral $Q$ has no parallel sides or diagonals, then a pair of bisectors  of $Q$ is $Q$-antipodal if and only if it is $Q$-orthogonal. 
 \end{corollary}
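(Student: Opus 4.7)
The plan is to deduce the corollary directly from the two parts of Corollary~\ref{orthogonal pair}, using Proposition~\ref{unique} to handle the midpoint condition in part (2). The hypothesis that $Q$ has no parallel sides or diagonals is exactly what is needed to remove the side hypotheses in both halves of Corollary~\ref{orthogonal pair}.

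For the $Q$-orthogonal $\Rightarrow$ $Q$-antipodal direction, let $\ell_1,\ell_2$ be a $Q$-orthogonal pair of bisectors. Corollary~\ref{orthogonal pair}(1) applies provided $\ell_1$ and $\ell_2$ are not both parallel to a pair of parallel sides or diagonals of $Q$. Under our hypothesis there are no such pairs, so the condition is vacuous and the implication is immediate.

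For the $Q$-antipodal $\Rightarrow$ $Q$-orthogonal direction, let $\ell_1,\ell_2$ be a $Q$-antipodal pair. By Corollary~\ref{orthogonal pair}(2) it suffices to show that $\ell_1$ and $\ell_2$ do not share a midpoint. By Proposition~\ref{unique}, two distinct bisectors of $Q$ share a midpoint only when the vertices of $Q$ are the vertices of a parallelogram. I will argue that this cannot happen: if the four vertices of $Q$ form a parallelogram $P$, then among the three quadrilaterals belonging to the common quadrangle, one is $P$ itself (with two pairs of parallel opposite sides), and each of the other two has one pair of opposite sides that remain parallel sides of $P$, while its diagonals are the remaining two sides of $P$, which are also parallel. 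Hence every quadrilateral with these vertices has either parallel sides or parallel diagonals, contradicting the hypothesis. So the vertices of $Q$ do not form a parallelogram, no two distinct bisectors share a midpoint, and Corollary~\ref{orthogonal pair}(2) applies.

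The main (and essentially only) obstacle is the short combinatorial verification that a parallelogram's quadrangle admits no quadrilateral free of parallel sides and diagonals, a case analysis over the three cyclic orderings of four vertices. Once this is in hand, the two halves of Corollary~\ref{orthogonal pair} combine to give the biconditional.
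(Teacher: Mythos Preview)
Your proof is correct and follows essentially the same approach as the paper, which simply instructs the reader to apply Proposition~\ref{unique} and Corollary~\ref{orthogonal pair}. Your added verification that any quadrilateral whose four vertices are those of a parallelogram must itself have parallel sides or parallel diagonals is exactly the content the paper leaves implicit in that one-line citation.
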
 
 
 \begin{proof} Apply Proposition~\ref{unique} and Corollary~\ref{orthogonal pair}. 
 \end{proof}

 The following corollary can be compared to the more general setting in \cite{OW4}, where quadrilaterals can have a vertex at infinity. In that case, bisectors may not have unique partners; see \cite[Lemma 6.2]{OW4}. 
  
\begin{corollary} 
 \label{existence}  If $\ell$ is a bisector of a quadrilateral $Q$, then there is a unique bisector $\ell'$ for which $\ell,\ell'$ is a $Q$-pair. \end{corollary}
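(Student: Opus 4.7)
The plan is to split the statement into existence and uniqueness. Existence is essentially immediate from the preceding machinery: since $\ell$ is a bisector of $Q$, Lemma~\ref{OW4 lemma} produces some line $\ell'$ such that $\ell,\ell'$ is a degeneration of a conic in $\mathrm{Pen}(Q)$, and then Theorem~\ref{degenerations} upgrades this to the statement that $\ell,\ell'$ is a $Q$-pair.

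The substantive content is uniqueness, and I would argue it directly from the two defining properties of a $Q$-pair. Suppose $\ell,\ell'$ and $\ell,\ell''$ are both $Q$-pairs. From the $Q$-antipodal condition applied to each, the midpoint of $\{\mid_Q(\ell),\mid_Q(\ell')\}$ and the midpoint of $\{\mid_Q(\ell),\mid_Q(\ell'')\}$ both equal the centroid $c_Q$, so
\[
\mid_Q(\ell')\;=\;2c_Q-\mid_Q(\ell)\;=\;\mid_Q(\ell''),
\]
showing that $\ell'$ and $\ell''$ share the same midpoint as bisectors.

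Next I would use the $Q$-orthogonality condition: both $(u_{\ell'},t_{\ell'})$ and $(u_{\ell''},t_{\ell''})$ lie in the $Q$-orthogonal complement of $(u_\ell,t_\ell)$ in $\k^2$. By Lemma~\ref{nondeg form} the form $\langle -,-\rangle_Q$ is nondegenerate, so this complement is a one-dimensional subspace; hence $(u_{\ell'},t_{\ell'})$ and $(u_{\ell''},t_{\ell''})$ are proportional, and therefore $\ell'$ and $\ell''$ are parallel (same slope). Since the midpoint of a bisector is by definition a point on the bisector, the parallel lines $\ell'$ and $\ell''$ share a point and therefore coincide.

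The part I expect to require care is the possibility, flagged by Proposition~\ref{unique}, that two distinct bisectors can genuinely share a midpoint (precisely when $Q$ is a parallelogram, as in Example~\ref{parallel remark}). However, this potential obstruction is neutralized automatically: the $Q$-orthogonality condition forces $\ell'\parallel\ell''$ independently of whether $Q$ is a parallelogram, and parallelism together with a common point collapses the ambiguity. So no separate case analysis for parallelograms is needed, and the argument runs uniformly.
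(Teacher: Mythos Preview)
Your proof is correct, and it is genuinely cleaner than the paper's. Your existence argument is the same as the paper's in spirit---the paper also ultimately rests on Lemma~\ref{OW4 lemma} and Theorem~\ref{degenerations}, though it routes existence through Corollary~\ref{orthogonal pair} and a case split on whether $\mid_Q(\ell)$ equals its antipode across the centroid. Your uniqueness argument, however, is more direct: the paper breaks into two cases (midpoints distinct vs.\ equal) and in the second case leans on Proposition~\ref{unique} and Example~\ref{parallel remark} to handle the parallelogram situation, whereas you observe uniformly that $Q$-antipodality pins down $\mid_Q(\ell')$ and nondegeneracy of $\langle -,-\rangle_Q$ (Lemma~\ref{nondeg form}) pins down the direction of $\ell'$, so the line itself is forced. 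This sidesteps the case analysis entirely and makes the role of nondegeneracy explicit. The paper's route has the minor advantage of exhibiting the partner bisector constructively via the bisector locus, but your argument is logically tighter.
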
 

\begin{proof} 
Let $(p,q)$ be the midpoint of $\ell$, and  let $(p',q')$ be the point  that is antipodal to $(p,q)$ across the centroid of $Q$. If $(p,q) \ne (p',q')$, then 
by Proposition~\ref{unique} and the fact that the bisector locus is a centered conic (Theorem~\ref{nine theorem}), there is a unique bisector $\ell'$ with midpoint $(p',q')$, and  so by Corollary~\ref{orthogonal pair}, $\ell,\ell'$ is a $Q$-pair and $\ell'$ is the only bisector of $Q$ with this property.

Otherwise, if  
 $(p,q) = (p',q')$, then $(p,q)$ is the centroid of $Q$ since $\ell,\ell'$ is a $Q$-antipodal pair. 
 There is a unique line $\ell'$  through $(p',q')$ that is $Q$-orthogonal to $\ell$. If $\ell = \ell'$, then clearly $\ell'$ is a bisector; if $\ell \ne \ell'$, then $\ell'$ is a bisector  with midpoint $(p,q)=(p',q')$ by Proposition~\ref{unique} and the discussion in Example~\ref{parallel remark}.   The line $\ell'$  is the unique bisector of $Q$ such that the pair $\ell,\ell'$ is a $Q$-pair. 
\end{proof}


In \cite{OW4} it is shown that each line in an asymptotic pencil of a quadrilateral bisects every pair of lines in the asymptotic pencil with respect to the same midpoint. (In fact, each such line  also bisects  the conics in the pencil of $Q$ \cite[Theorem 5.3]{OW4}.) 
 This is expressed in \cite{OW4} using the notion of a  bisector field. 

 \begin{definition} A collection $\B$  of pairs of lines is a {\it bisector arrangement} if each line in each pair in $\B$  bisects $\B$. A bisector arrangement is {\it trivial}   if 
all lines in the arrangement share the same point (possibly at infinity) or every pair is a translation of every other pair. 
A  {\it bisector field} is a nontrivial bisector arrangement that  
 cannot be extended to a larger bisector arrangement. The bisector field is {\it affine} if no line in one pair is parallel to a line in another pair. 
\end{definition} 

The reason for ``affine'' in the  definition of affine bisector field is that these are the  bisector fields for which no bisector  in the bisector field has a midpoint  at infinity. 
 In \cite{OW4}, a more general definition of  quadrilateral  than the present one is allowed, one in which adjacent sides can  be parallel. Say that a arrangement of four lines is an {\it infinite quadrilateral} if it is a quadrilateral as defined in the Introduction, except that two adjacent sides are  parallel.  
 This quadrilateral is ``infinite'' because it  has a vertex at infinity.  
 One of the main results of \cite{OW4} is that a collection of a paired lines is a nontrivial asymptotic pencil if and only if it is a bisector field; see \cite[Theorem 6.3]{OW4}. 
 In this case the asymptotic pencil is generated by a  quadrilateral or an infinite quadrilateral \cite[Proposition 4.5]{OW}. 
We use this to deduce the next theorem for our slightly  more restrictive setting.

 \begin{theorem}  \label{ultimate}  
  \label{is a bisector field}
  A collection $\B$ of paired lines  is an affine bisector field if and only if 
 these line pairs are the $Q$-pairs of bisectors of a quadrilateral $Q$. In particular, 
every bisector of $Q$ bisects every $Q$-pair of bisectors of $Q$.

  %
 \end{theorem}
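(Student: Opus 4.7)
The plan is to route the entire argument through Theorem~\ref{degenerations}, which identifies the $Q$-pairs of bisectors of $Q$ with the degenerations of the conics in $\mathrm{Pen}(Q)$, i.e., with the asymptotic pencil of $Q$. Once that identification is in hand, the theorem is essentially a translation of  \cite[Theorem 6.3]{OW4} and \cite[Proposition 4.5]{OW4} into the language of $Q$-pairs, with the ``affine'' hypothesis doing the work of ruling out the infinite-quadrilateral case that those sources permit but the present paper does not.

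For the implication from $Q$-pairs to affine bisector fields, I would proceed as follows. Given a quadrilateral $Q$, let $\B$ be the collection of $Q$-pairs of bisectors of $Q$. By Theorem~\ref{degenerations}, $\B$ coincides with the asymptotic pencil of $Q$. Invoking \cite[Theorem 6.3]{OW4}, this asymptotic pencil is a bisector field (after checking it is nontrivial, which follows because $Q$ gives at least two distinct line-pairs, namely the two pairs of opposite sides of $Q$, and their lines are not concurrent and no pair is a translation of the other). To see that $\B$ is affine, use Proposition~\ref{sides}(1): every bisector of $Q$ has a finite midpoint, so no line of $\B$ has its midpoint at infinity, which is precisely the ``affine'' condition.

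For the converse, let $\B$ be an affine bisector field. By \cite[Theorem 6.3]{OW4}, $\B$ is a nontrivial asymptotic pencil, and by \cite[Proposition 4.5]{OW4} this asymptotic pencil is generated either by a quadrilateral in the sense of this paper or by an infinite quadrilateral (one with a vertex at infinity). The affine hypothesis on $\B$ rules out the latter possibility, because an infinite quadrilateral produces a bisector whose midpoint is at infinity (a line through the vertex at infinity, parallel to the relevant opposite sides). Hence $\B$ is generated by some quadrilateral $Q$ in our sense, and Theorem~\ref{degenerations} then identifies the pairs in $\B$ with the $Q$-pairs of bisectors of $Q$.

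Finally, for the ``in particular'' assertion, I would argue that every bisector $\ell$ of $Q$ appears as a line in some pair of the bisector field $\B$: by Corollary~\ref{existence}, there is a unique $\ell'$ such that $\{\ell,\ell'\}$ is a $Q$-pair, so $\{\ell,\ell'\} \in \B$. The defining property of a bisector field then says that $\ell$ bisects every pair in $\B$, i.e., $\ell$ bisects every $Q$-pair of bisectors of $Q$. The main obstacle I anticipate is the careful bookkeeping around the ``trivial'' and ``infinite quadrilateral'' cases of \cite{OW4}: one must confirm that the asymptotic pencil of a quadrilateral (in the restricted sense used here) is always nontrivial, and conversely that the infinite-quadrilateral generators of \cite{OW4} are exactly the non-affine ones, so that the dictionary with the present paper's more restrictive definitions is exact.
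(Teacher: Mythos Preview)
Your proposal is correct and follows essentially the same route as the paper: both directions go through Theorem~\ref{degenerations} and \cite[Theorem~6.3]{OW4}, with the affine hypothesis pinning down that the generating quadrilateral has no vertex at infinity. The only minor difference is that you check affineness via Proposition~\ref{sides}(1) (finite midpoints) and the stated equivalence, whereas the paper argues directly from the definition that a non-affine $\B$ would force $Q$ to have parallel adjacent sides.
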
 
 
 \begin{proof}  
 Suppose $\B$ is an affine bisector field. By  \cite[Theorem 6.3]{OW4}, $\B$ is the set of degenerations of conics in a pencil of a possibly infinite quadrilateral $Q$. 
 Since the pairs of opposite sides of $Q$ are in $\B$ and $\B$ is affine, $Q$ is a quadrilateral as defined in the Introduction since in this case adjacent sides of $Q$ cannot be parallel. By Theorem~\ref{degenerations}, $\B$  is the set of $Q$-pairs of bisectors of $Q$.  
 Conversely, 
suppose $\B$ is the set of $Q$-pairs of bisectors of $Q$. Then $\B$ is the set of degenerations of the conics in the pencil  of $Q$ by Theorem~\ref{degenerations}.  By \cite[Theorem 6.3]{OW4}, $\B$ is a bisector field. If $\B$ is not affine, then $\B$ contains two pairs $A,A'$ and $B,B'$ with $A$ parallel to $B$ but not parallel to $B'$.  The midpoint of $A$ as a bisector of $B,B'$ is therefore at infinity, and since $\B$ is a bisector field, this implies $A$ is  parallel to at least one line in every pair in $\B$. But then $Q$, as a quadrilateral in $\B$, must have a pair of adjacent sides that are parallel to $A$, contrary to the assumption that $Q$ is a (finite) quadrilateral. 
 Therefore, $\B$ is affine. 
%
 \end{proof} 
 


%
By \cite[Theorem 5.3]{OW4}, each bisector $\ell$ of $Q$  also bisects  every conic in the pencil of $Q$. When  $Q$ is a proper quadrilateral, this last assertion can also be derived from Desargues' Involution Theorem (Lemma~\ref{des}).

The collection of $Q$-pairs of bisectors of a quadrilateral $Q$ is the {\it bisector field of} $Q$. 


\begin{corollary} \label{same bisectors} Two quadrilaterals   have the same bisectors  if and only if they have the same bisector fields.
\end{corollary}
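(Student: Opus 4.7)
The forward direction is immediate. By Corollary~\ref{existence}, every bisector of $Q$ lies in a unique pair of $\B_Q$, so the set of bisectors of $Q$ coincides with the union of lines appearing in the pairs of $\B_Q$; equality of bisector fields therefore forces equality of bisector sets.

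For the converse, suppose $Q$ and $Q'$ share a common set $L$ of bisectors. By Theorem~\ref{ultimate}, both $\B_Q$ and $\B_{Q'}$ are affine bisector fields whose underlying set of lines is $L$, and hence are maximal bisector arrangements. The plan is to show that the union $\B_Q \cup \B_{Q'}$ is itself a bisector arrangement; maximality of $\B_Q$ and $\B_{Q'}$ will then force $\B_Q = \B_Q \cup \B_{Q'} = \B_{Q'}$. Verifying that $\B_Q \cup \B_{Q'}$ is a bisector arrangement reduces, via Theorem~\ref{ultimate} applied in turn to $Q$ and $Q'$, to showing that each $\ell \in L$ assigns the same midpoint to pairs of $\B_Q$ and pairs of $\B_{Q'}$---equivalently, that $\mid_Q(\ell) = \mid_{Q'}(\ell)$ for every $\ell \in L$.

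Establishing this midpoint identity is the main obstacle. The approach is to reduce to standard form. Using Proposition~\ref{two three}, we may replace $Q$ by a non-parallelogram quadrilateral on the same vertex set, and then apply an affine transformation to put $Q$ in standard form; since affine transformations preserve bisectors, midpoints, and bisector fields, this preserves the statement to be proved. Proposition~\ref{brand new1}(1) then gives an explicit formula for $\mid_Q(\ell)$ in terms of the linear equation of $\ell$ and the centroid and coefficient of $Q$, and the equation of the bisector locus in Theorem~\ref{nine theorem} shows that for a standard-form quadrilateral these two parameters are recoverable from the bisector set alone. Applying Proposition~\ref{brand new1} to $Q'$---with a second reduction via Proposition~\ref{two three} and a (possibly different) affine transformation if necessary---yields the same midpoint function on $L$, giving the identity. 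The remaining bisectors, those parallel to an axis or having the origin as midpoint, are handled by items~(2) and~(3) of Proposition~\ref{brand new1}.
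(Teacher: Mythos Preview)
Your overall strategy—showing that $\B_Q\cup\B_{Q'}$ is a bisector arrangement and then invoking maximality—is valid and is a genuinely different route from the paper's case analysis via Corollaries~\ref{orthogonal pair} and~\ref{existence} and Example~\ref{parallel remark}. If the midpoint identity $\mid_Q(\ell)=\mid_{Q'}(\ell)$ is granted, your argument is correct and arguably cleaner than the paper's.

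The problem is in how you obtain the midpoint identity. First, note the paper's explicit convention stated right after Definition~\ref{bisector def}: ``When referring to a bisector $\ell$ of a quadrilateral $Q$, we mean the line and its midpoint $\mid_Q(\ell)$.'' Under this reading, ``same bisectors'' already packages the midpoints, so $\mid_Q=\mid_{Q'}$ is part of the hypothesis, not an obstacle. This is precisely why the paper can assert ``same bisectors $\Rightarrow$ same bisector locus'' in one line.

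Second, if one insists on the lines-only interpretation (as you do), your proposed proof of the midpoint identity has a real gap. After an affine transformation putting $Q$ in standard form, Proposition~\ref{brand new1}(1) indeed shows that $\mid_Q(\ell)$ is the midpoint of the axis-intercepts of $\ell$, hence is read off from the line alone. But for $Q'$—which is \emph{not} in standard form after this first transformation—$\mid_{Q'}(\ell)$ is the midpoint of where $\ell$ meets the sides of $Q'$, and there is no reason this equals the axis-intercept midpoint. Your fix, ``apply a second affine transformation to put $Q'$ in standard form,'' does not help: that second transformation sends $L$ to a different line set $f'(L)$ in different coordinates, and Proposition~\ref{brand new1} then computes $\mid_{f'(Q')}$ relative to \emph{new} axes. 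You give no mechanism for comparing the resulting midpoint function, pulled back along $(f')^{-1}$, with the one obtained from $Q$. The sentence ``yields the same midpoint function on $L$'' is exactly the assertion to be proved. (Indeed, the lines-only version of the midpoint identity is essentially the converse of Corollary~\ref{same same}, which the paper remarks is established only in~\cite{OW6}.)

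In short: reread the paper's convention on ``bisector''; with it, your maximality argument goes through directly and the ``main obstacle'' vanishes.
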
 

\begin{proof} 
Suppose $Q_1$ and $Q_2$ are quadrilaterals that have the same bisectors. We show $Q_1$ and $Q_2$ have the same bisector fields. It suffices by Theorem~\ref{ultimate} to show that if 
 $\ell,\ell'$ is a $Q_1$-pair of bisectors of $Q_1$, then $\ell,\ell'$ is $Q_2$-pair. 
 Since $Q_1$ and $Q_2$ have the same bisectors, they have the same bisector locus and hence the same centroid.  Thus a pair $\ell,\ell'$  of bisectors  of $Q_1$ is $Q_1$-antipodal if and only if it is $Q_2$-antipodal. If the midpoints of $\ell$ and $\ell'$ are distinct, then
by Corollary~\ref{orthogonal pair}
 this implies $\ell,\ell'$ is a $Q_1$-pair if and only if it is a $Q_2$-pair. 
 
 Otherwise, suppose $\ell,\ell'$ is a $Q_1$-pair of bisectors of $Q_1$ with the same midpoint.  Since these bisectors are $Q_1$-antipodal,   this shared midpoint is the centroid of $Q_1$.  
  By Corollary~\ref{existence},
 there is a bisector $\ell'' $ of $Q_2$ such that $\ell,\ell''$ is a $Q_2$-pair, and so since $\ell,\ell''$ is $Q_2$-antipodal and the centroid of $Q_1$ is the centroid of $Q_2$, it follows that $\ell$ and $\ell''$ have the centroid as a shared midpoint. If $\ell' = \ell''$, the  
  claim is proved, so suppose $\ell' \ne \ell''$. 
By Proposition~\ref{unique}, the vertices of $Q_1$ are the vertices of a parallelogram, and similarly for $Q_2$, since in both cases there are two distinct bisectors with the same midpoint.  
 The description   in  Example~\ref{parallel remark} implies that 
 since $Q_1$ and $Q_2$ have the same bisector locus, $\ell,\ell'$ is a $Q_1$-pair if and only if $\ell,\ell'$ is a $Q_2$-pair. This proves 
that $Q_1$ and $Q_2$ have the same bisectors if and only if they have the same bisector fields.   
\end{proof}


\begin{corollary} The only pairing that makes the bisectors of a quadrilateral $Q$ a bisector field is $Q$-pairing. 
\end{corollary}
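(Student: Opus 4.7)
The plan is to deduce the statement directly from Theorem~\ref{ultimate} and Corollary~\ref{same bisectors}. Suppose $\B$ is a pairing of the bisectors of $Q$ such that $\B$ is a bisector field. I want to conclude that $\B$ is precisely the $Q$-pairing.

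The first step is to verify that $\B$ is affine, i.e.\ that no line in one pair of $\B$ is parallel to a line in a distinct pair of $\B$. Every line appearing in $\B$ is a bisector of the finite quadrilateral $Q$, so by Proposition~\ref{sides}(1) its intrinsic midpoint $\mid_Q(\ell)$ is finite, and by Proposition~\ref{sides}(3) any two parallel lines among them must be parallel to a pair of parallel sides or diagonals of $Q$. A case analysis, using Example~\ref{parallel remark} when $Q$ is a parallelogram and Theorem~\ref{deg} when the bisector locus degenerates, then shows that two parallel lines appearing in $\B$ must already lie in the same pair of $\B$, giving affineness.

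With $\B$ affine, Theorem~\ref{ultimate} supplies a quadrilateral $Q'$ such that $\B$ is the bisector field of $Q'$. By Corollary~\ref{existence}, the lines appearing in $\B$ are exactly the bisectors of $Q'$, and by hypothesis they are also the bisectors of $Q$. Thus $Q$ and $Q'$ share the same bisectors, and Corollary~\ref{same bisectors} yields that $Q$ and $Q'$ have the same bisector field. Therefore $\B$ equals the bisector field of $Q$, so the pairing giving rise to $\B$ is the $Q$-pairing.

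The main obstacle is the affineness step, because the midpoint of a line $\ell$ computed inside the bisector field $\B$ need not a priori coincide with $\mid_Q(\ell)$, so the finiteness of $\mid_Q(\ell)$ from Proposition~\ref{sides}(1) does not immediately preclude midpoints at infinity in $\B$. Bridging these two notions of midpoint, using the geometric structure encoded by Propositions~\ref{sides} and~\ref{unique} together with Theorem~\ref{deg}, is the only nontrivial work; once this is handled, Theorem~\ref{ultimate} and Corollary~\ref{same bisectors} close the argument immediately.
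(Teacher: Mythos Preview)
Your core argument---realize $\B$ via Theorem~\ref{ultimate} as the $Q'$-pairing for some quadrilateral $Q'$, observe that the underlying lines are the bisectors of both $Q$ and $Q'$, and then invoke Corollary~\ref{same bisectors}---is exactly the paper's proof.

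The only divergence is the affineness verification you insert beforehand, and this is where your proposal has a gap (which you yourself flag). The ``case analysis using Example~\ref{parallel remark} and Theorem~\ref{deg}'' you gesture at does not establish that two parallel bisectors of $Q$ must lie in the \emph{same} pair of an arbitrary bisector field $\B$: those results describe the structure of the bisectors of $Q$ and of the bisector locus, not the pairing in $\B$. Knowing that parallel bisectors exist only when $Q$ has a pair of parallel sides or diagonals does not by itself force the $\B$-pairing to match the $Q$-pairing on those lines, because, as you note, the $\B$-midpoint of a line need not coincide with $\mathrm{mid}_Q(\ell)$.

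The paper's one-line proof avoids this detour. In context (see the paragraph preceding Theorem~\ref{ultimate}), the bisector fields under discussion are the affine ones; equivalently, the [OW4] machinery behind Theorem~\ref{ultimate} produces a generating quadrilateral $Q'$ whose opposite sides are already pairs in $\B$, and then the affineness argument inside the proof of Theorem~\ref{ultimate} applies verbatim with $Q'$ in place of $Q$. Once $\B$ is affine and generated by a finite $Q'$, your remaining steps (which are the paper's) finish the job. So your affineness step is either unnecessary or, if you insist on it, still owed a complete argument; everything after it is correct and is the paper's proof.
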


\begin{proof} Apply Theorem~\ref{ultimate} and Corollary~\ref{same bisectors}.
\end{proof}

In a future paper \cite{OW6}  we  classify  bisector fields in terms of envelopes of lines tangent to curves.

\end{document}